\begin{document}

\title{The stratification by rank for homogeneous polynomials with border rank 5 which essentially depend on 5 variables\thanks{Partially supported by
MIUR and GNSAGA of INdAM}}
%\subtitle{}

\titlerunning{Border rank $5$}        % if too long for running head

\author{Edoardo Ballico}

%\authorrunning{Short form of author list} % if too long for running head

\institute{E. Ballico\at
            Dept. of Mathematics, University of Trento, 38123 Trento (Italy) \\
              Tel.: 39+0461281646\\
              Fax: 39+0461281624
              \email{ballico@science.unitn.it}}

              \date{Received: date / Accepted: date}
% The correct dates will be entered by the editor

\maketitle

\begin{abstract}
We give the stratification by the symmetric tensor rank of all degree $d \ge 9$ homogeneous polynomials
with border rank $5$ and which depend essentially on at least 5 variables, extending previous works
(A. Bernardi, A. Gimigliano, M. Id\`{a}, E. Ballico) on lower border ranks. For the polynomials
which depend on at least 5 variables only 5 ranks are possible: $5$, $d+3$, $2d+1$, $3d-1$, $4d-3$, but each of the ranks
$3d-1$ and $2d+1$ is achieved in two geometrically different situations. These ranks are uniquely determined by a certain degree
5 zero-dimensional scheme $A$ associated to the polynomial. The polynomial $f$ depends essentially on at least 5 variables
if and only if $A$ is linearly independent (in all cases $f$ essentially depends on exactly 5 variables). The polynomial has rank $4d-3$ (resp $3d-1$, resp. $2d+1$, resp. $d+3$, resp. $5$)
if $A$ has $1$ (resp. $2$, resp. $3$, resp. $4$, resp. $5$) connected components. The assumption $d\ge 9$ guarantees that each polynomial
has a uniquely determined associated scheme $A$. In each case we describe the dimension of the families of the
polynomials with prescribed rank, each irreducible family being determined by the degrees
of the connected components of the associated scheme $A$.
\keywords{symmetric tensor rank\and symmetric rank\and border rank\and cactus rank}
% \PACS{PACS code1 \and PACS code2 \and more}
 \subclass{14N05}
\end{abstract}

\section{Introduction}
\label{intro}
Let $\mathbb {C}[x_0,\dots ,x_m]_d$ be the set of all homogeneous degree $d$ polynomials in the variables $x_0,\dots ,x_m$ with complex coefficients.
For any $f\in \mathbb {C}[x_0,\dots ,x_m]_d\setminus \{0\}$ the {\it rank} of $f$ (or the {\it symmetric tensor rank} of $f$) is the minimal integer $r>0$ such
that $f = \sum _{i=1}^{r} \ell _i^d$ with $\ell _i\in \mathbb {C}[x_0,\dots ,x_m]_1$. These additive decompositions of $f$ as sums of $d$-powers of linear forms
are useful even to decompose symmetric tensors and hence they appear in some applications (\cite{ACCF}, \cite{cm}, \cite{DM}, \cite{Ch}, \cite{dLC}, \cite{McC}, \cite{Co1}, \cite{JS}, \cite{l}). A different notion is the notion of {\it border rank} of $f$ or approximate rank of $f$ (the minimal integer $r$ such that $f$ is the limit
of a family of homogeneous polynomials of rank $\le r$). Both notions may be translated in the setting of projective geometry and algebraic geometry
and it is in this setting that it was done the classification of all $f$ with border rank $\le 3$ (\cite[Theorems 32 and 37]{bgi}) and of border rank $4$ (\cite{bb2}).
In this paper we give the classification of all $f$ with border rank $5$ AND which depend essentially on at least 5 variables (in each case $P$ depends on exactly 5 variables, because it is contained in the degree $d$ Veronese embedding of a $4$-dimensional linear subspace of $\mathbb {P}^m$). This is a strong condition and in Remark \ref{e2e} and Proposition \ref{t1} we explain our knowledge of the rank for polynomials depending on fewer variables.

Now we explain the translation of the additive decomposition of homogeneous polynomials in terms of projective geometry.
For all positive integers $m, d$ let $\nu _d: \mathbb {P}^m\to \mathbb {P}^r$, $r:={m+d}\choose{m}-1$, denote the Veronese embedding induced by
the vector space $\mathbb {C}[x_0,\dots ,x_m]_d$. Set $X_{m,d}:= \nu _d(\mathbb {P}^m)$. For any $P\in \mathbb {P}^r$ the {\it rank} (or {\it symmetric tensor rank} or {\it symmetric rank})
$r_{m,d}( P)$ of $P$
is the minimal cardinality of a set $S\subset X_{m,d}$ such that $P\in \langle \nu _d(S)\rangle$, where $\langle \ \ \rangle$ denote the linear span.
For any integer $t\ge 1$ the $t$-secant variety $\sigma _t(X_{m,d})$ of $X_{m,d}$ is the closure in $\mathbb {P}^r$ of the union of all linear
spaces $\langle \nu _d(S)\rangle$ with $S\subset \mathbb {P}^m$ and $\sharp (S)\le t$
(to get the closure it is the same if we use the euclidean topology or the Zariski topology). This approximation makes sense for homogeneous polynomials, too:
the homogeneous polynomial $f$ is a limit of a family of homogeneous polynomials with rank $\le t$ if and only if its associated point
in $\mathbb {P}^r$ is contained in $\sigma _t(X_{m,d})$. Notice that ``~closure~'' in the definition of $\sigma _t(X_{m,d})$
is a good way to formalize the approximation by points with rank $\le t$. Set $\sigma _0(X_{m,d}) =\emptyset$.
The {\it border rank} $b_{r,m}( P)$ of $P$ is the minimal integer $t>0$ such that $P\in \sigma _t(X_{m,d})$, i.e. the only integer $b>0$ such that
$P\in \sigma _b(X_{m,d})\setminus \sigma _{b-1}(X_{m,d})$. 
The {\it cactus rank} (\cite{br}, \cite{rs}) (introduced in \cite{ik} with the name {\it scheme-rank}) $z_{m,d}( P)$ of $P$ is the minimal integer $z$ such that there is a zero-dimensional scheme $Z\subset \mathbb {P}^m$
with $\deg (Z)=z$ and $P\in \langle \nu _d(Z)\rangle$. We always have $z_{m,d}( P) \le r_{m,d}( P)$. If $d\ge b_{r,m}( P) -1$, then $z_{r,m}( P) \le b_{r,m}( P)$ and often equality
holds. We are interested in points with border rank $5$ and we assume that $d$ is not too low, e.g., we assume $d\ge 9$. Since
$d\ge 4$ and $b_{m,d}( P)=5$, we have $z_{m,d}( P)\le 5$ and hence we get at least one scheme $A$ with $\deg (A)\le 5$ and $P\in \langle \nu _d(A)\rangle$. Now assume $m\ge 4$, $d\ge 4$ and that $P\notin \langle \nu _d(M)\rangle$
for any linear subspace $M$ of $\mathbb {P}^m$ with $\dim (M) \le 3$, i.e. assume that the polynomial $f$ associated to $P$ does not depend
on at most 4 variables, up to a linear change of coordinates. Since $P\in \langle \nu _d(A)\rangle$, the condition ``~ $P\notin \langle \nu _d(M)\rangle$
for any linear subspace $M$ of $\mathbb {P}^m$ with $\dim (M) \le 3$~'' implies $\dim (\langle A\rangle )=4$, i.e. $A$ is linearly independent.
Assume that $A$ has $s\ge 1$ connected components, say $A = A_1\sqcup \cdots \sqcup A_s$. We say that $A$ has type $(s;b_1,\dots ,b_s)$. We have $5 =\deg (A) =b_1+\cdots +b_s$. In our case we have $z_{m,d}( P) =5$ if and only if $b_{m,d}( P) =5$ and the scheme $A$ evincing the cactus rank is unique (\cite[Corollary 2.7]{bgl} or \cite[Lemma 4.2 and text after Remark 2.7]{bb2}). Therefore if $d\ge 9$ we may define the type
of $P$ as the type of $A$. We prove the following result.

\begin{theorem}\label{i2}
Fix integers $m\ge 4$ and $d\ge 9$. Let $P\in \mathbb {P}^r$, $r={m+d}\choose{m}-1$, be a point with border rank $5$. Assume
that there is no 3-dimensional linear subspace $H\subset \mathbb {P}^m$ such that $P\in \langle \nu _d(H)\rangle$.
\begin{itemize}
\item[(i)] If $P$ has type $(1;5)$, then $r_{m,d}( P)=4d-3$.
\item[(ii)] If $P$ has type $(2;3,2)$, then $r_{m,d}( P)=3d-1$.
\item[(iii)] If $P$ has type $(2;4,1)$, then $r_{m,d}( P)=3d-1$.
\item[(iv)] If $P$ has type $(3;3,1,1)$, then $r_{m,d}( P)=2d+1$.
\item[(v)] If $P$ has type $(3;2,2,1)$, then $r_{m,d}( P)=2d+1$.
\item[(vi)] If $P$ has type $(4;2,1,1,1)$, then $r_{m,d}( P)=d+3$.
\item[(vii)] If $P$ has type $(5;1,1,1,1,1)$, then $r_{m,d}( P)=5$.
\end{itemize} 
\end{theorem}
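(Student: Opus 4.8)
The plan is to reduce the computation of $r_{m,d}(P)$ to a separate analysis of each connected component of the unique minimal scheme $A$, and then to identify each contribution with the rank of a point on a suitable rational normal curve. First I would record the geometry of $A$ forced by the hypotheses. As explained before the statement, $A$ is the unique scheme evincing the cactus rank, $\deg(A)=5$, and the assumption that no $3$-dimensional $H$ satisfies $P\in\langle\nu_d(H)\rangle$ forces $\dim\langle A\rangle=4$; hence $A$ spans $\langle A\rangle=\mathbb{P}^4$ inside $\mathbb{P}^m$ and, since $\sum_i(b_i-1)+(s-1)=4$, each component $A_i$ must span a $\mathbb{P}^{b_i-1}$. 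Minimality of $A$ then excludes every non-curvilinear component: for instance a planar fat point $2p$ can never occur, because every point of $\langle\nu_d(2p)\rangle$ has the shape $\ell^{d-1}\ell'$ and so already lies in the span of a length-$2$ subscheme. Thus each $A_i$ is a curvilinear scheme of degree $b_i$ lying on a rational normal curve $C_i\subset\langle A_i\rangle=\mathbb{P}^{b_i-1}$ of degree $b_i-1$. Because $A$ is linearly independent, $\langle\nu_d(A)\rangle=\bigoplus_i\langle\nu_d(A_i)\rangle$, so $P$ decomposes uniquely as $P=P_1+\cdots+P_s$ with $P_i\in\langle\nu_d(A_i)\rangle$, and minimality of $A$ guarantees that $A_i$ is the minimal scheme of $P_i$.

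Next I would produce the upper bound by transporting each $P_i$ to a rational normal curve. Since $\nu_d(C_i)$ is a rational normal curve of degree $(b_i-1)d$ and $P_i$ lies in the span of the length-$b_i$ osculating jet of this curve, Sylvester's classification together with the Comas--Seiguer theorem yields a decomposition of $P_i$ into $(b_i-1)d-b_i+2=(b_i-1)(d-1)+1$ points of $\nu_d(C_i)\subset X_{m,d}$ (the inequalities $b_i\le 5$ and $d\ge 9$ keep us in the range where the non-generic value $N-b_i+2$ is the relevant one on the degree-$N$ curve). Concatenating these decompositions over $i$ gives
\[
r_{m,d}(P)\le\sum_{i=1}^{s}\big[(b_i-1)(d-1)+1\big]=(d-1)\Big(\sum_i b_i-s\Big)+s=(d-1)(5-s)+s .
\]
The last expression depends only on the number $s$ of components, which already explains why the pairs $(2;3,2),(2;4,1)$ and $(3;3,1,1),(3;2,2,1)$ give equal ranks, and evaluating it at $s=1,\dots,5$ reproduces exactly the values $4d-3,\,3d-1,\,2d+1,\,d+3,\,5$ claimed in (i)--(vii).

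The hard part is the matching lower bound, i.e. showing that rank is in fact additive over the components and that no component admits a shorter decomposition. Here one cannot simply project $\langle\nu_d(A)\rangle$ onto its summands, because for a putative optimal reduced set $S\subset X_{m,d}$ the individual points $\nu_d(q)$, $q\in S$, need not lie in $\langle\nu_d(A)\rangle$. Instead I would argue scheme-theoretically. If $P\in\langle\nu_d(S)\rangle$ with $S$ reduced and minimal, then $P\in\langle\nu_d(A)\rangle\cap\langle\nu_d(S)\rangle$ forces $h^1(\mathbb{P}^m,\mathcal{I}_{A\cup S}(d))>0$ by the standard concision lemma (\cite{bgi},\cite{bb2}). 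I would then peel off the low-degree curves through the supports of the $A_i$ — lines when $b_i\le 2$, and the conic or higher rational normal curve $C_i$ otherwise — using the residual exact sequence
\[
0\to\mathcal{I}_{\mathrm{Res}_L(A\cup S)}(d-1)\to\mathcal{I}_{A\cup S}(d)\to\mathcal{I}_{(A\cup S)\cap L,\,L}(d)\to 0 ,
\]
propagating the condition $h^1>0$ to the residual scheme. Each such step strips a subscheme of degree at most $d+1$ from the chosen curve while keeping the cohomological obstruction alive, and running the residuation component by component forces exactly $(b_i-1)(d-1)+1$ points of $S$ to be attached to each $A_i$.

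Controlling this residuation is the main technical obstacle. The delicate points will be to rule out that a single low-degree curve interacts economically with two or more components simultaneously (which is what would let $S$ be shorter than the sum of the per-component counts), to verify that after each peeling the residual scheme still contains a genuine jet of the relevant $A_i$, and to handle the borderline degrees where $(A\cup S)\cap L$ sits right at the Castelnuovo threshold. It is precisely in this bookkeeping that the case distinction by the type $(s;b_1,\dots,b_s)$ enters and that the hypothesis $d\ge 9$, which both secures the uniqueness of $A$ and keeps all the curve-rank computations in their non-generic regime, is genuinely used. Once the lower bound $r_{m,d}(P)\ge(d-1)(5-s)+s$ is established it meets the upper bound above, giving equality and hence the additivity $r_{m,d}(P)=\sum_i r_{m,d}(P_i)$, which is the content of (i)--(vii).
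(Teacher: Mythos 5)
Your setup and your upper bound are sound and match the paper's: the uniqueness and curvilinearity of $A$, the decomposition $P=\sum_i P_i$ along the linearly independent components, and the count $\sum_i\bigl[(b_i-1)(d-1)+1\bigr]=(d-1)(5-s)+s$ via Sylvester on the rational normal curves $\nu_d(C_i)$ all check out (and your observation that the bound depends only on $s$ is a nice way to see why $(2;3,2)$ and $(2;4,1)$, resp.\ $(3;3,1,1)$ and $(3;2,2,1)$, give the same value). But the proposal has a genuine gap: the lower bound is not proved, only described. The sentence ``running the residuation component by component forces exactly $(b_i-1)(d-1)+1$ points of $S$ to be attached to each $A_i$'' is a restatement of the theorem, not an argument, and the paragraph that follows it is an honest list of exactly the obstacles (curves interacting with several components at once, borderline degrees at the Castelnuovo threshold, keeping the $h^1$ obstruction attached to a genuine jet of $A_i$) that constitute essentially the entire content of the paper: step (b) of the proof with its tree of substeps for type $(1;5)$, step (d) for $(2;4,1)$, and Propositions \ref{w3}, \ref{w5.0}, \ref{w4}, \ref{w5} for the remaining types. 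In particular, the implicit appeal to additivity of rank over the components ($r_{m,d}(P)=\sum_i r_{m,d}(P_i)$) begs the question; such additivity is not automatic for points in spans of schemes supported on complementary linear spaces, and the paper has to establish it case by case, identifying for each type which configurations of lines, conics and plane cubics can absorb too much of $W_0=A\cup B$ and ruling them out by degree counts against $\sharp(B)\le (d-1)(5-s)+s-1$.

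A secondary, smaller point: your exclusion of non-curvilinear components only treats the planar fat point $2p$. A connected Gorenstein component of degree $4$ spanning a $\mathbb{P}^3$ (or the whole degree-$5$ scheme spanning $\mathbb{P}^4$) requires the Eisenbud--Harris classification of schemes in linearly general position, which is what the paper invokes; your $\ell^{d-1}\ell'$ argument does not by itself cover those cases. This is repairable, but as written the reduction to ``each $A_i$ lies on a rational normal curve $C_i$'' is incomplete, and the Claim in step (b) of the paper (existence of the rational normal curve through a connected curvilinear linearly independent scheme) is also used and should be stated rather than assumed.
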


In each case (i),\ldots ,(vii)  the description of each $A$ easily give the dimension, $5m+5-s$, of all points $P$ in that case and that they are parametrized by an irreducible variety (see Remark \ref{e0e}). Each case occurs (Remark \ref{zzz2}).

Case (vii) is obvious. Case (i) is the hard one and we use its proof (called step (b) in the proof of Theorem \ref{i2}) to prove the other cases
and (for the easiest cases) more informations that just the integer $r_{m,d}( P)$. For each $P\in \mathbb {P}^r$ let $\mathcal {S}( P)$ be the set of all sets $S\subset \mathbb {P}^m$
evincing the rank of $P$, i.e. the set of all subset $S\subset \mathbb {P}^m$ such that $P\in \langle \nu _d(S)\rangle$ and $\sharp (S) =r_{m,d}( P)$ (see Proposition \ref{w3}
(case arbitrary cactus rank $b\ge 2$ and type $(b-1;2,1,\dots )$, Proposition \ref{w5.0} (case $s=3$, $b_1=b_2=2$, $b_3=1$), Proposition \ref{w4} (case
arbitrary cactus rank $b\ge 3$ and type $(b-2;3,1,\dots )$
and Proposition \ref{w5} (case $s=2$, $b_1=3$, $b_2=2$)). In each case $P$ depends on exactly $5$ variables, because $P\in \langle \nu _d(A)\rangle$.

\begin{remark}\label{ww1} If $d\ge 9$, then for each $P\in \mathbb {P}^r$ with border rank $5$ there is a unique degree 5 scheme $A$ evincing the cactus rank of $P$ (\cite[Corollary 2.7]{bgl} or \cite[Lemma 4.2 and text after Remark 2.7]{bb2}).
Fix any degree $5$ zero-dimensional scheme $A$ achieving the cactus rank of some $P\in \mathbb {P}^r$ with border rank $5$. Fix any $Q\in \langle \nu _d(A)\rangle$
and let $E\subseteq A$ be a minimal subscheme of $A$ such that $P\in \langle \nu _d(E)\rangle$. If $d\ge 9$ we get that $b_{m,d}(Q) =z_{m,d}(Q) =\deg (E)$ and
that $E$ is the only scheme evincing the cactus rank of $Q$. $Q$ depends essentially on 5 variables if and
only if $E =A$ and $\dim (\langle A\rangle )=\mathbb {P}^4$. In each step of the proof of Theorem \ref{i2} we give a classification of all $A$ appearing as cactus rank
and an easy way to write $A$. Therefore, using $A$, one may easily give $P\in \mathbb {P}^r$ with a prescribed rank, among the possible ones
$4d-3$, $3d-1$, $2d+1$, $d+3$, $5$. Without knowing $A$ it may be possible to detect the value of $r_{m,d}( P)$ for a specific $P$, just
because Theorem \ref{i2} list all possible ranks and the difference between two different ranks is huge for large $d$.
\end{remark} 

\begin{remark}\label{ww2}
Take the set-up of Theorem \ref{i2}, but only assume $d\ge 4$. Fix $P\in \mathbb {P}^r$ with border rank $5$ and assume
that there is no 3-dimensional linear subspace $H\subset \mathbb {P}^m$ such that $P\in \langle \nu _d(H)\rangle$. Since $d\ge 4$, there is at least one  scheme $A$ evincing
the cactus rank of $P$. Call $(s;b_1,\dots ,b_s)$, $b_1+\cdots +b_s=5$, the type of $A$. The classification of the possible connected components $A_1,\dots ,A_s$
of $A$ is done in the corresponding step of the proof of Theorem \ref{i2}, because it does not depend on $d$. At the beginning of each step we prove an upper bound for
the integer $r_{m,d}( P)$ (the hard part was then to check the opposite inequality). This part of the inequality does not depend on $d$. Hence in each case
$r_{m,d}( P)$ is at most the value listed in Theorem \ref{i2}. In particular we get $r_{m,d}( P) \le 4d-3$.\end{remark}

We work over an algebraically closed field $\mathbb {K}$ with $\mbox{char}(\mathbb {K}) =0$. Any easy weak form of Lefschetz's principle would allow to deduce this case from the case $\mathbb {K}=\mathbb {C}$.

\section{Preliminaries}\label{S2}

Let $X$ be a projective variety, $D$ an effective Cartier divisor of $X$ and $Z\subset X$ a zero-dimensional scheme. The residual scheme $\mbox{Res}_D(Z)$ of $Z$ with respect
to $D$ (or with respect to the inclusion $D\subset X$) is the closed subscheme of $X$ with $\mathcal {I}_Z:\mathcal {I}_D$ as its ideal sheaf. For every line bundle $\mathcal {L}$ on $X$ we have a residual exact sequence of coherent sheaves
\begin{equation}\label{eqres}
0 \to \mathcal {I}_{\mbox{Res}_D(Z)}\otimes \mathcal {L}(-D) \to \mathcal {I}_Z\otimes \mathcal {L}\to \mathcal {I}_{Z\cap D}\otimes \mathcal {L}_{|D}\to 0.
\end{equation}

The case $m=2$ of the next lemma is a very particular case of \cite[Corollaire 2]{ep}; the general case easily follows by induction on $m$, taking a hyperplane $H\subset \mathbb {P}^m$ with maximal
$\deg (Z\cap H)$ as in step (b) of the proof of Theorem \ref{i2} (case $g\le 2$, $g'=1$); if $\deg (Z)\le 2d+1$, then the lemma is \cite[Lemma 34]{bgi}.

\begin{lemma}\label{w1}
Fix an integer $d\ge 6$. Let $Z\subset \mathbb {P}^m$, $m\ge 2$, be a zero-dimensional scheme with $\deg (Z)\le 3d+1$ and $h^1(\mathcal {I}_Z(d)) >0$.
Then either there is a line $L\subset \mathbb {P}^m$ with $\deg (L\cap Z)\ge d+1$ or there is a conic $T\subset \mathbb {P}^m$ with $\deg (T\cap Z)\ge 2d+2$
or there is a plane cubic $F$ with $\deg (F\cap Z)\ge 3d$.\end{lemma}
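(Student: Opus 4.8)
The plan is to argue by induction on $m$. The base case $m=2$ is the cited \cite[Corollaire 2]{ep}, and since \cite[Lemma 34]{bgi} already gives the statement for every $m$ when $\deg(Z)\le 2d+1$, I may assume throughout that $2d+2\le\deg(Z)\le 3d+1$. The only tool is the residual exact sequence (\ref{eqres}) with $D$ a hyperplane and $\mathcal{L}=\mathcal{O}_{\mathbb{P}^m}(d)$. For the inductive step assume $m\ge 3$. I would first dispose of the degenerate case in which $Z$ is contained in a hyperplane $H'\cong\mathbb{P}^{m-1}$: then $\mbox{Res}_{H'}(Z)=\emptyset$ and (\ref{eqres}) identifies $h^1(\mathcal{I}_Z(d))$ with $h^1(H',\mathcal{I}_{Z,H'}(d))$ (using $H^1(\mathcal{O}(d-1))=H^2(\mathcal{O}(d-1))=0$ on $\mathbb{P}^m$, valid for $m\ge 3$), so the inductive hypothesis in $\mathbb{P}^{m-1}$ produces the required line, conic or plane cubic already inside $H'$. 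From now on I assume that $Z$ spans $\mathbb{P}^m$.

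Following step (b), choose a hyperplane $H$ with $a:=\deg(Z\cap H)$ maximal and put $W:=\mbox{Res}_H(Z)$, so $\deg(W)=\deg(Z)-a$. Twisting (\ref{eqres}) by $\mathcal{O}(d)$ gives the cohomology sequence
\[
H^1(\mathcal{I}_{W}(d-1))\to H^1(\mathcal{I}_Z(d))\to H^1\bigl(H,\mathcal{I}_{Z\cap H}(d)\bigr),
\]
so $h^1(\mathcal{I}_Z(d))>0$ forces $h^1(H,\mathcal{I}_{Z\cap H}(d))>0$ or $h^1(\mathcal{I}_W(d-1))>0$. In the first case $Z\cap H$ is a zero-dimensional scheme of degree $a\le 3d+1$ in $H\cong\mathbb{P}^{m-1}$, and the inductive hypothesis yields a line, conic or plane cubic inside $H$ meeting $Z\cap H$, hence $Z$, in the required length. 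So the heart of the matter is the residual term $h^1(\mathcal{I}_W(d-1))>0$.

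For the residual term I would run a dichotomy on $a$. Since every length-$3$ subscheme spans at most a plane and $m\ge 3$, one always has $a\ge 3$, so the inductive statement is applicable to $W$ in degree $d-1$. If $a$ is so large that $\deg(W)\le d$, then a conic or cubic obstruction is excluded by degree and a line obstruction would force all of $W$ (length $\le d$) onto a single line, where it still imposes independent conditions in degree $d-1$; hence $h^1(\mathcal{I}_W(d-1))=0$ and this case cannot arise. In the complementary range $d+1\le\deg(W)\le 3d-2$ I would apply the inductive lemma in degree $d-1$ to $W$, obtaining a line, conic or plane cubic $C$ with $\deg(C\cap W)\ge d$, $2d$ or $3d-3$ respectively (the boundary $d=6$, where $d-1=5$, being absorbed into \cite[Lemma 34]{bgi}). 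Restricting (\ref{eqres}) to a curve $C\not\subset H$ gives $\deg(C\cap Z)=\deg(C\cap W)+\deg(C\cap Z\cap H)$, so I must still recover $1$, $2$ or $3$ extra units of intersection lying on $H$ in order to upgrade the thresholds $d$, $2d$, $3d-3$ to the desired $d+1$, $2d+2$, $3d$.

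That last upgrade is the step I expect to be the main obstacle. The naive induction in degree $d-1$ loses exactly one unit on a line, two on a conic and three on a cubic, and these missing units must be located on $H$: here the maximality of $a=\deg(Z\cap H)$ together with a Bezout count for $C\cap H$ should force $\deg(C\cap Z\cap H)$ to be large enough, and an iteration peeling off successive hyperplanes---mirroring the $g\le 2$, $g'=1$ bookkeeping of step (b)---closes the boundary cases. Everything else (the two degenerate reductions and the cohomology splitting) is formal.
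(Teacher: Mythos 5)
Your two preliminary reductions and the first branch of the dichotomy (the obstruction living on $H$ in degree $d$, handled by induction on $m$ with no loss of threshold) are fine. The gap is exactly where you say it is, and it is not closable by the mechanism you propose. Applying the full trichotomy to $W=\mbox{Res}_H(Z)$ in degree $d-1$ gives a curve $C$ with $\deg(C\cap W)\ge d$, $2d$ or $3d-3$, and you then hope to recover the missing $1$, $2$ or $3$ units from intersection with $Z$ supported on $H$. But there is no reason those units exist: for a line $L\not\subset H$, the scheme $L\cap H$ is a single point which need not lie on $Z$ at all, so $\deg(L\cap Z)$ can be exactly $d$, one short of the required $d+1$; the maximality of $\deg(Z\cap H)$ gives no control over where a curve attached to the residual scheme meets $H$, and the identity $\deg(C\cap Z)=\deg(C\cap W)+\deg(C\cap Z\cap H)$ you write is at best an inequality. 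There is also a secondary gap at $d=6$: you cannot invoke the lemma at degree $d-1=5$, and \cite[Lemma 34]{bgi} only covers $\deg(W)\le 2(d-1)+1=2d-1$, while $\deg(W)$ can be as large as $3d-2$.

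The route the paper intends (step (b) of the proof of Theorem \ref{i2}, case $g\le 2$, $g'=1$) sidesteps the degradation entirely rather than repairing it. Iterate the maximal-hyperplane construction: $H_1,H_2,\dots$, $e_i:=\deg(W_{i-1}\cap H_i)$, $W_i:=\mbox{Res}_{H_i}(W_{i-1})$, and let $g$ be minimal with $h^1(H_g,\mathcal{I}_{W_{g-1}\cap H_g}(d+1-g))>0$. The numerics $e_1\ge e_2\ge\cdots$, $e_g\ge d+3-g$, $e_{g-1}\ge d+m+1-g$ (from the spanning of $W_{g-2}$) and $\sum e_i\le 3d+1$ force $g\le 2$. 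If $g=1$ you are inside $H_1$ in degree $d$ and induction on $m$ gives the full thresholds $d+1$, $2d+2$, $3d$. If $g=2$, then $e_2\le e_1$ and $e_1+e_2\le 3d+1$ give $e_2\le (3d+1)/2<2d=2(d-1)+2$, so \cite[Lemma 34]{bgi} applied to $W_1\cap H_2$ in degree $d-1$ produces a \emph{line} $L$ with $\deg(L\cap W_1)\ge (d-1)+2=d+1$ directly; since $W_1\subseteq Z$ this is already the required bound, no intersection with $H$ needs to be recovered, and the conic and cubic alternatives simply never arise in the residual branch. Your closing sentence gestures at this bookkeeping, but as a patch for the upgrade step rather than as its replacement, which is what it has to be.
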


\begin{remark}\label{w2}
Take the set-up of Lemma \ref{w1} and assume the existence of a plane conic $T$
such that $\deg (T\cap Z)\ge 2d+2$, but that there is no line $L\subset \mathbb {P}^m$ with $\deg (L\cap Z)\ge d+2$. When $Z$ has many reduced connected components, it is obvious that $T$ must be reduced. Assume that $T$ is reduced, but reducible, say $T = D\cup R$, with $D$ and $R$ lines. Set $\{o\}:= D\cap R$. Since
$\deg (D\cap Z)\le d+1$ and $\deg (R\cap Z) \le d+1$, we get $\deg (D\cap Z)=\deg (R\cap Z)=d+1$ and that $Z$ is a Cartier
divisor of the nodal curve $T$ (it is a general property of nodal curves with smooth irreducible component: for any zero-dimensional scheme $E\subset D\cup R$
we have $\deg (E\cap D)+\deg (E\cap R)-1\le \deg (E)\le \deg (E\cap D)+\deg (E\cap R)$ and $\deg (E) =\deg (E\cap D) +\deg (E\cap R)$ if and only if
$E$ is a Cartier divisor of $D\cup R$). Now assume that existence of a plane cubic $F$ with $\deg (F\cap Z)\ge 3d$. $F$ is not reduced
if and only if there is a line $L\subset F$ appearing in $F$ with multiplicity at least two. To get that $F$ is reduced it is sufficient to assume that $\deg (R\cap Z)\le d+1$
for each line $R$ and that $Z$ has at least $2d+2$ reduced connected components.
\end{remark}

\begin{remark}\label{w3.0}
Fix integers $m\ge 2$ and $d\ge 3$. Let $Z\subset \mathbb {P}^m$ be a subscheme such that $Z$ spans $\mathbb {P}^m$, $\deg (Z)\le d+1+m$ and $h^1(\mathcal {I}_Z(d)>0$.
Then $\deg (Z) =d+1+m$ and there is a line $L\subset \mathbb {P}^m$ such that $\deg (Z\cap L) = d+2$ (a quick proof: use induction on $m$ and the proof of Theorem \ref{i2} below
using the hyperplanes $H_i$; we get $g\le 2$).
\end{remark}

\begin{remark}\label{uno.1}
Let $T\subset \mathbb {P}^2$ be a reduced curve of degree $t<d$. It is connected and the projective space $\langle \nu _d(T))\rangle$ has dimension
$x:= {{d+2}\choose{2}} -{{d-t+2}\choose{2}}-1$. Every point of $\langle \nu _d(T)\rangle$ has rank at most $x$ with respect to the curve $\nu _d(T)$ (the proof of
\cite[Proposition 5.1]{lt} works verbatim for reduced and connected curves). Hence $\sharp (B\cap T) \le x$. If $t=1$ (resp. $t=2$, resp $t=3$) then $x =d$ (resp. $2d$, resp. $3d-1$).
\end{remark}

\section{Proof of Theorem \ref{i2} and related result}\label{S3}

Step (b) of the proof of Theorem \ref{i2} is by far the most difficult part of this paper and the one which may be used elsewhere. We outline here the proof of Theorem \ref{i2}
and of the other results of this section.

\subsection{Outline of the proof of Theorem \ref{i2} and of the other results of this section} 

We first give the classification of all possible degree $5$ schemes $A$. The key step (called step (b))
is the one in which we handle the case $s=1$ (it is an enhanced version of the proof of \cite[Proposition 5.19]{bb2}). This step is subdivided into several substeps and sub-substeps  (up to, e.g., step (b2.2.2.1), which is the first step of step (b2.2.2),
which is the second step of step (b2.2), which is the second step of step (b2), which is the second step of step (b)). There are several good reasons for the splitting of the proof in this way.
In the proof of the other cases and in the proof of Propositions \ref{w3}, \ref{w5.0}, \ref{w4}, \ref{w5} we need to quote a specific substep or to modify it a little bit; in this way all proofs
are detailed and complete, but with no duplication, and each part of the proof has its own label, making easier to quote it in future works. In step (c) of the proof of Theorem \ref{i2} we reduce some cases to Propositions \ref{w3}, \ref{w5.0}, \ref{w4} and \ref{w5}.
Proposition \ref{w3} covers the case $b_1=2$ and $b_2=b_3=b_4=1$. Proposition \ref{w5.0} covers the case $b_1=b_2 =2$ and $b_3=1$. Proposition \ref{w5} covers the case $b_1=3$ and $b_2=b_3=1$. Proposition \ref{w4} covers the case $b_1=3$ and $b_2 =2$. In step (d) of the proof of Theorem \ref{i2} we prove the case $s=2$, $b_1=4$, $b_2=1$(this is far easier than in step (b)). Proposition \ref{w3} covers all points $P$ with $b_{m,d}( P) =z_{m,d}( P) =b$ evinced by a scheme of type $(b-1;2,\dots ,1)$ (it also describes
all $B\in \mathcal {S}( P)$, i.e. all sets $S\subset \mathbb {P}^m$
such that $P\in \langle \nu _d(S)\rangle$ and $\sharp (S) =r_{m,d}( P)$).
Proposition \ref{w5.0} gives the rank and a description of all $B\in \mathcal {S}( P)$ when $s= 3$, $b_1=b_2=2$ and $b_3=1$. Proposition \ref{w5} describes the case
$s=2$, $b_1=3$, $b_2=2$. The proof of Theorem \ref{i2} may be seen as an iteration (several times) of the proofs in \cite[\S 5.2]{bb2}.

\begin{proof}[Proof of Theorem \ref{i2}]
It is a general result that every zero-dimensional scheme evincing the cactus rank of some $P\in \mathbb {P}^r$ is Gorenstein  (\cite[Lemma 2.3]{bb+}). There is a scheme, $A$, evincing the border rank $b_{m,d}( P)$ of $P$, because $d\ge b_{m,d}( P)-1$ (\cite[Lemma 2.6]{bgl}). This scheme is unique, because $d\ge 9 =2b_{m,d}( P)-1$ (\cite[Corollary 2.7 and its proof]{bgl} or \cite[Lemma 4.2 and the text after Remark 2.7]{bb2}).
It also evinces the cactus rank of $P$, i.e. $z_{m,d}( P) =5$, because \cite{bgi} and \cite{bb2} give the classifications for all points with cactus rank $\le 4$ and they have
border rank $\le 4$. 
Since $P\notin \langle \nu _d(H)\rangle$ for any 3-dimensional linear subspace $H\subset \mathbb {P}^m$, $\deg (A) =5$ and $P\in \langle \nu _d(A)\rangle$, we have
$\dim (\langle A\rangle ) =4$, i.e. $A$ is linearly independent. In particular it is in linearly general position in $\langle A\rangle$. The scheme $A$ is not
the double $2O$ of one point of $\langle A\rangle$ (i.e. the closed subscheme of the projective space $\langle A\rangle$ with $(\mathcal {I} _{O,\langle A\rangle})^2$ as its ideal sheaf), because the proof of \cite[Theorem 32]{bgi} shows that for each $P\in \langle \nu _d(2O)\rangle$ there
is a line $L\subset \mathbb {P}^m$ such that $P\in \langle \nu _d(L)\rangle$; alternatively, use that $A$ is Gorenstein by \cite[Lemma 2.3]{bb+}. By \cite[Theorem 1.3]{eh} $A$ is curvilinear.

By assumption $A$ is linearly independent and it spans a 4-dimensional linear subspace of $\mathbb {P}^m$. By concision (\cite[Exercise 3.2.2.2]{l}),
it is sufficient to do the case $m=4$. Write $A = A_1\sqcup \cdots \sqcup A_s$ with $\deg (A_i) =b_i$, $b_i\ge b_j$ if $i\le j$, and set $\{O_i\}:= (A_1)_{red}$. Since $A$ evinces the cactus rank of $P$, then $P\notin \langle \nu _d(A')\rangle$
for any $A'\subsetneq A$. Let $B\subset \mathbb {P}^m$
any set evincing the rank of $P$. Set $W_0:= A\cup B$. If $A\ne B$, then $h^1(\mathcal {I}_{W_0}(d)) >0$ (\cite[Lemma 1]{bb1}). If $s\ne 5$, then $A\ne B$.

\quad (a) If $A$ is reduced, i.e. if $s=5$, then $r_{4,d}( P) \le 5$. Since we assumed border rank 5, we get $r_{4,d}( P) =5$. 

\quad (b) Now assume $s=1$ and hence $b_1=5$. The scheme $A_1$ is curvilinear and unramified, because $\mathbb {P}^4 =\mathbb {P}(W)$ with
$\dim (W) =5$, $\langle A_1\rangle =\mathbb {P}^4$ and we may apply \cite[Theorem 1.3]{eh}. 

\quad {\emph {Claim:}} $A_1$ is contained in a rational normal curve $C\subset \mathbb {P}^4$.

\quad {\emph {Proof of the Claim:}} Since $A_1$ is connected and curvilinear for each $i=1,\dots ,5$, $A_1$ has a unique subscheme $A_1[i]$ of $A_1$ such
that $\deg (A_1[i]) =i$. Since $A_1$ is linearly independent, then $\dim (\langle A_1[i]\rangle )=i-1$. For a parametrization $t\mapsto (t,t^2,t^3,t^4)$ of $C$ take the flag of linear subspaces $\langle A_1[1]\rangle \subset \langle A_1[2]\rangle \subset \cdots \subset  \langle A_1[5]\rangle$.

The existence of the rational normal curve $C$ also gives that $A_1$ is unique, up to a projective transformation. The curve $\nu _d( C)$ is a degree $4d$ rational normal curve in its linear span.
By Sylvester's theorem (\cite{cs}, \cite[Theorem 4.1]{lt}, \cite[Theorem 23]{bgi}) the rank of $P$ with respect to $\nu _d( C)$ is $4d-3$. Hence $r _{4,d}( P) \le 4d-3$. Assume $\sharp (B) \le 4d-4$ and hence $\deg (W_0) \le 4d+1$. Let $H_1$ be a hyperplane such that $e_1:= \deg (W_0\cap H_1)$ is maximal. Set $W_1:= \mbox{Res}_{H_1}(W_0)$. Fix an integer
$i\ge 2$ and assume to have defined the integers $e_j$, the hyperplanes $H_j$ and the scheme $W_j$, $1\le j <i$. Let $H_i$ be any hyperplane such that
$e_i:= \deg (H_i\cap W_{i-1})$ is maximal. Set $W_i:= \mbox{Res}_{H_i}(W_{i-1})$. We have $e_i\ge e_{i+1}$ for all $i$. For each integer $i>0$ we have the residual exact sequence
\begin{equation}\label{eqc1}
0 \to \mathcal {I}_{W_i}(d-i) \to \mathcal {I}_{W_{i-1}}(d+1-i) \to \mathcal {I}_{W_{i-1}\cap H_i,H_i}(d+1-i) \to 0
\end{equation}
Since $h^1(\mathcal {I}_{W_0}(d)) >0$, there is an integer $i>0$ such that $h^1(H_i,\mathcal {I}_{W_{i-1}\cap H_i,H_i}(d+1-i)) >0$. We call $g$ the minimum such an integer.
Since any zero-dimensional scheme
with degree $4$ of $\mathbb {P}^4$ is contained in a hyperplane, if $e_i\le 3$, then $W_i= \emptyset$ and $e_j=0$ if $j>i$. Hence $e_{d+2}=0$ and $e_{d+1}\le 1$.
Since $h^1(\mathcal {O}_{\mathbb {P}^4}(d))=h^1(\mathcal {I}_Q) =0$ for any $Q\in \mathbb {P}^4$, we get $g\le d$. By \cite[Lemma 34]{bgi} either $e_g \ge 2(d+1-g)+2$ or there is a line $L\subset H_g$
such that $\deg (L\cap W_{i-1}) \ge d+3-g$. Assume for the moment $g\ge 2$ and $e_g\le 2(d+1-g)+1$. Since $e_g>0$, $W_{g-2}$ spans $\mathbb {P}^4$.
Hence $e_{g-1} \ge d+5-g$. This inequality is obvious if $e_g\ge 2(d+1-g)+2$ and $g\le d-1$, because $2(d+1-g)+2 \ge d+5-g$ if $g\le d-1$. Now assume
$g=d$; if $e_d \le 4$ and there is no line $L\subset H_d$ with $\deg (L\cap W_{d-1})\ge 3$, then $e_d=4$ and $W_{d-1}\cap H_d$ is contained in a plane; hence even in this case we get $e_{g-1}\ge 5 = d+5-g$. Hence if $g\ge 2$, then $e_{g-1} \ge d+5-g$. Since $e_i\ge e_{i+1}$ for all $i$, we
get that $4d+1 \ge \deg (W_0)\ge g(d+5-g) -2$. Set $\psi (t) = t(d+5-t) -2$. The function $\psi : \mathbb {R} \to \mathbb {R}$ is increasing if $t\le (d+5)/2$, decreasing if $t\ge (d+5)/2$ and strictly monotone
in any interval not containing $(d+5)/2$. Since $\psi (4) = 4d+2 > 4d+1$ and $\psi (d) =5d-2>4d+1$, we get $1\le g \le 3$.

\quad (b1) Assume $g=3$. We have $e_3 \ge d$ and $e_1\ge e_2 \ge d+2$. Hence $\deg (W_3) \le d-3$ and $e_1\le 2d-1$. Since $h^1(\mathcal {I}_{W_4}(d-3)) =0$, \cite[Lemma 5.1]{bb2}
gives $W_0\subset H_1\cup H_2\cup H_3$. Since $e_1\ge e_2\ge e_3$ and $e_1+e_2+e_3\le 4d+1$, we have $e_3 \le (4d+1)/3 \le 2(d-2)+1$. By \cite[Lemma 34]{bgi}
there is a line $L\subset H_3$ such that $\deg (L\cap W_3) \ge d$. Taking instead of $H_3$ any hyperplane $M\subset L$ we get $W_0\subset H_1\cup H_2\cup M$.
In particular we have $B\subset H_1\cup H_2\cup L$. Since $A$ is curvilinear, we even get $W_0\subset H_1\cup H_2\cup L$. Set $Z_0:= W_0$. Let $N_1\subset \mathbb {P}^4$ be a hyperplane containing $L$ and such that
$f_1:= \deg (W_0\cap N_1)$ is maximal among the hyperplanes containing $L$. We have $f_1\ge e_3+2 \ge d+2$. Set $Z_1:= \mbox{Res}_{N_1}(Z_0)$. Let $N_2\subset \mathbb {P}^4$ be a hyperplane such that $f_2:= \deg (N_2\cap Z_1)$ is maximal. Set $Z_2:= \mbox{Res}_{N_2}(Z_1)$. For any integer $i\ge 3$ let $N_i\subset \mathbb {P}^4$
be a hyperplane such that $f_i:= \deg (Z_{i-1}\cap N_i)$ is maximal. Set $Z_i:= \mbox{Res}_{N_i}(Z_{i-1})$.
 Let $g'$ be the minimal positive integer $i$ such
that $h^1(N_i,\mathcal {I}_{N_i\cap Z_{i-1},N_i}(d+1-i)) >0$ (it exists by the residual exact sequence (\ref{eqc1}) obtained using the hyperplanes $N_i$). Now we only have that $f_i\ge f_{i+1}$ if $i\ge 2$, but we also have $\deg (Z_0)-f_1 \le 3d-1$. Hence $g'\le 3$. 

\quad (b1.1) Assume $g'=3$. We have $f_3 \le (3d-1)/2 \le 2(d-2)+1$. Hence there is a line $R\subset N_3$ such
that $\deg (R\cap Z_2) \ge d$. Since $B\cap L\cap W_1=\emptyset$, we have $R\ne L$. Set $Z'_0:= Z_0$. Take any hyperplane $N'_1$ containing $L\cup R$ and with
$f'_i:= \deg (N'_1\cap Z_0)$ maximal among the hyperplanes containing $L\cup R$. Set $Z'_1:= \mbox{Res}_{N'_1}(Z'_0)$. Define as above the integers $f'_i$, the hyperplanes $N'_i$ and the schemes $W'_i$ with $f'_i\ge f'_{i+1}$ for all $i\ge 2$ and $f'_1 \ge 2d-1$. Let $g''$ be the minimal positive integer $i$ such
that $h^1(N'_i,\mathcal {I}_{N'_i\cap Z'_{i-1},N'_i}(d+1-i)) >0$ (it exists by the residual exact sequence (\ref{eqc1}) obtained using the hyperplanes $N'_i$).
Since $f'_1\ge 2d-1$, we have $g'' \le 2$.

\quad (b1.1.1) Assume for the moment $g'' =2$. We get $f'_2\ge d+1$ and the existence of a line $D\subset N'_2$ such that $\deg (D\cap Z'_2) \ge d+1$ (\cite[Lemma 34]{bgi}).

First assume that $L\cup D\cup R$ is contained in a hyperplane $M$. In this case we get $e_1 \ge \deg (M\cap W_0) \ge 3d-1$. Hence $e_3+e_2\le d+2$, a contradiction.
Now assume
that $L\cup D\cup R$ is not contained in a hyperplane. Since $h^0(\mathcal {O}_{\mathbb {P}^4}(2)) =15$ and
$h^0(\mathcal {O}_{L\cup R\cup D}(2)) \le 9$, we have $h^0(\mathcal {I}_{L\cup R\cup D}(2)) \ge 6$. Since $L\cup D\cup R$ is not contained in a hyperplane, either these lines are pairwise disjoint
and they are not contained in a plane
or two of them meets and the other one is disjoint from the plane spanned by the first two lines. Therefore $L\cup R\cup D$ is the scheme-theoretic base locus
of $|\mathcal {I}_{L\cup R\cup D}(2)|$. Let
$Q$ be a general hyperquadric containing $L\cup D\cup R$.  A dimensional count gives that $Q$ is irreducible.
First assume $h^1(\mathcal {I}_{\mbox{Res}_Q(W_0)}(d-2)) =0$. By \cite[Lemma 5.1]{bb2} we get $A\cup B\subset Q$. Since $L\cup R\cup D$ is the scheme-theoretic base locus
of $|\mathcal {I}_{L\cup R\cup D}(2)|$ and $A$ is curvilinear, we get $A\subset L\cup R\cup D$, contradicting the fact that at most two of these lines contain $O$.

Now assume $h^1(\mathcal {I}_{\mbox{Res}_Q(W_0)}(d-2))>0$. Since $\deg (\mbox{Res}_Q(W_0)) \le d+2$, we get the existence of a line $R'$ such $
\deg (R' \cap \mbox{Res}_Q(W_0)) \ge d$. We have $h^0(\mathcal {I}_{L\cup D\cup R\cup R'}(2)) \ge 3$ and we take a general $Q_1\in |\mathcal {I}_{L\cup R'\cup R\cup D}(2)|$. As above we find $A\cup B\subset Q_1$. Since $e_1\le 2d-1$, no 3 of the lines $L$, $R$, $R'$ and $D$ are contained in a plane.
Hence at most two of these lines contain $O$ and at most one of the lines is tangent to $C$ at $O$. We get $\deg (B) \ge -3 +\deg (W_0\cap L)+\deg (W_0\cap R)+\deg (W_0\cap R')
+\deg (W_0\cap D) \ge -3+d+d+d+1+d =4d-2$, a contradiction.

\quad (b1.1.2) Now assume $g''=1$. Since $A$ is connected and not contained in a hyperplane, \cite[Lemma 5.1]{bb2} gives $h^1(\mathcal {I}_{Z'_1}(d-1)) >0$.
We have $\deg (Z'_1) \le 3d-1$. By Lemma \ref{w1} either there is a line $R\subset \mathbb {P}^4$ such that $\deg (R\cap Z'_1) \ge d+1$ or there is a reduced
conic $T\subset \mathbb {P}^4$ such that $\deg (T\cap Z'_1) \ge 2d$. In the latter case we would have $e_1\ge 2d+1$, a contradiction. Hence there
is a line $R\subset \mathbb {P}^4$ such that $\deg (R\cap Z'_1) \ge d+1$. Since $R\cap (B\setminus R\cap L) \supseteq R\cap (B\setminus B\cap N'_1)\ne \emptyset$, then $R\ne L$ and hence at most one of the lines $R, L$
is tangent to $C$ at $O$. If $O\notin (L\cap R)$, then $\deg (W_0\cap (L\cup R)) \ge 2d$ and hence $e_1\ge 2d$, a contradiction.
Now assume $O\in (L\cap R)$. At most one of the lines $L, R$ is tangent to $C$ at $O$. We get $\deg (W_0\cap (L\cup R)) \ge 2d-1$.
By concision (\cite[Exercise 3.2.2.2]{l}) $B$ spans $\mathbb {P}^4$. Hence there is a hyperplane $M\supset L\cup R$ containing a point
of $B\setminus B\cap (L\cup R)$. Hence $e_1\ge 2d$, a contradiction.

\quad (b1.2) Assume $g' =2$. Since $f_2 \le e_1\le 2d-1$, there
is a line $R\subset N_2$ such that $\deg (R\cap Z_1)\ge d+1$. Since $R\cap (B\setminus B\cap L) \subseteq R\cap B\cap Z_1$ and $\deg (R\cap B\cap Z_1) \le
\deg (R\cap (B\setminus B\cap L)) +\deg (R\cap A) \le \deg (R\cap (B\setminus B\cap L)) +2$, we get $R\ne L$ and hence $e_1\ge -1 +\deg (L\cap W_0)+\deg (R\cap W_0)
\ge 2d$, a contradiction.

\quad (b2) Assume $g=2$. Since $e_2\le e_1$ and $e_1+e_2\le 4d+1$, we have $e_2\le 2d$. By Lemma \ref{w1} either there is a line $L\subset H_2$ such
that $\deg (L\cap W_1)\ge d+1$ or $e_2=2d$ and there is a conic $T\subset H_2$ such that $W_1\cap H_2 \subset T$, but there is no line $L\subset T$
with $\deg (L\cap W_0)\ge d+1$; in the latter case $T$ is reduced and if $T$ is reducible, then $\deg (J\cap W_1) =d$ for every component of $T$ (Remark \ref{w2}).

\quad (b2.1) Assume $e_2=2d$ and the existence of a conic $T\subset H_2$ such that $W_1\cap H_2 \subset T$. Since $A$ spans $\mathbb {P}^4$, then $A\nsubseteq T$
and hence $O\in T$. Since $B$ spans $\mathbb {P}^4$ (\cite[Exercise 3.2.2.2]{l}),
there is $o\in B\setminus B\cap \langle T\rangle$. Let $M$ be the plane spanned by $o$ and $T$. Since $e_1\le 2d+1$, we
get $e_1=2d+1$, $\sharp (B) =4d-4$, $O\notin B$ and that $W_0\cap M =\{o\} \sqcup (W_1\cap T)$ with $\deg (W_1\cap T) = 2d$. Therefore $B = (B\cap H_1)\sqcup (B\cap T)$
and no point of $B\cap H_1$ is contained in the plane $\langle T\rangle$. Since $A\nsubseteq M$, \cite[Lemma 5.1]{bb2}
gives  $h^1(\mathcal {I}_{\mbox{Res}_M(W_0)}(d-1)) >0$. Lemma \ref{w1} and Remark \ref{w2}
give that either there is a line $R$ with $\deg (R\cap \mbox{Res}_M(W_0)) \ge d+1$ or $\deg (\mbox{Res}_M(W_0)) =2d$ and $\mbox{Res}_M(W_0)$ is contained in a reduced conic $T'$. The plane $M$ and hence
$R$ or $T$ depends on the choice of $o$ (call them $M_o,R_o,T_o$).

\quad (b2.1.1) First assume that $\deg (\mbox{Res}_M(W_0)) =2d$ and that $\mbox{Res}_M(W_0)$ is contained in a reduced conic $T'$, but there is no line $R$ with
$\deg (R\cap \mbox{Res}_M(W_0)) \ge d+1$; hence if $T'$ is reducible, then $\deg (J\cap \mbox{Res}_M(W_0))=d$ for each component of $T'$ and the
singular point of $T'$ is not contained in $\mbox{Res}_M(W_0)$. Fix $q\in T'\cap B$ and take the hyperplane
$M_q:= \langle \{q\}\cup T\rangle$. We have $\mbox{Res}_{M_q}(B) = ((T'\cap B)\setminus \{q\}) \sqcup \{o\}$. This set is not contained in a line (even if $T'$ is reducible), because $d\ge 9$. This set is contained in a conic only if $o\in T'$. Hence (applying \cite[Lemma 5.1]{bb2} to $M_q$) we get $B\subset T\cup T'$. As in step (b2.1) we have
$O\in T'$. Hence $O\in T\cap T'$. If one of the two conics is reducible, then taking a connected union $J\subset T\cup T'$ of 3 lines we get $e_1 \ge 2d+2$, a contradiction. Hence both conics $T, T'$ are smooth.
Since $\langle B\rangle =\mathbb {P}^4$, we have $\langle T\rangle \cap \langle T'\rangle = \{O\}$. Hence at most one of the two conics is tangent to $C$ at $O$.
If $T'$ is not tangent to $C$ at $O$ we get $\sharp (B\cap T)\ge 2d-2$ and $\sharp (B\cap T')\ge 2d-1$, a contradiction (and similarly if $T$ is not tangent to $C$ at $O$).

 \quad (b2.1.2) Now assume the existence of the line $R$. Since $e_1=2d+1$, then $T\cup R$ spans $\mathbb {P}^4$, i.e. $R\cap \langle T\rangle = \emptyset$.
 In particular $O\notin R\cap T$. First assume $O\notin R$. We have $\sharp (B\cap R) =d+1$, contradicting the case $t=1$ of Remark \ref{uno.1}.
 Now assume $O\notin T$ and hence $\sharp (B\cap T) =2d$. Since $O\in R$, we get $O\notin \langle R\rangle$ and either $\sharp (B\cap (R\setminus \{O\})) = d$
 or $R$ is the tangent line to $C$ at $O$. Since $h^0(\mathcal {I}_{T\cup R}(2)) =7$, there is $Q\in |\mathcal {I}_{T\cup R}(2)|$ with $\deg (Q\cap W_0) \ge 3d+2$ (e.g., if $R$ is not tangent to $C$ at $O$ it is sufficient to take $Q$ containing $R\cup T$ and the degree two zero-dimensional subscheme $\eta$ of $A$). Since $\deg (Q\cap \mbox{Res}_Q(W_0))\le d-1$,
 we have $h^1(\mathcal {I}_{\mbox{Res}_Q(W_0)}(d-2)) =0$ and hence $Q\supset W_0$ (\cite[Lemma 5.1]{bb2}), while we may even find $Q\in |\mathcal {I}_{T\cup R\cup \eta}(2)|$ not containing
 the degree $4$ subscheme of $A$, because $A$ is curvilinear.

\quad (b2.2) Assume the existence of a line $L\subset H_2$ such that $\deg (L\cap W_1) \ge d+1$. The case $t=1$ of Remark \ref{uno.1} gives $O\in L$,
$d-1 \le \sharp (B\cap L) \le d$ and that $\sharp (B\cap L)=d$ and $O\notin B$ if $L$ is not the tangent line to $C$ at $O$. Let $N_1\subset \mathbb {P}^4$
be a hyperplane containing $L$ and such that $f_1:= \deg (W_0\cap L)$ is maximal among the hyperplanes containing $L$. Since $A$ spans $\mathbb {P}^4$, we
have $f_1\ge d+3$ and hence $e_1\ge d+3$. Set $Z_0:= W_0$ and $Z_1:= \mbox{Res}_{N_1}(Z_0)$. Let $N_2$ be a hyperplane such that $f_2:= \deg (N_2\cap Z_1)$. Set
$Z_1:= \mbox{Res}_{N_2}(Z_1)$. Define inductively the hyperplane $N_i$, $i\ge 3$, the schemes $Z_i:= \mbox{Res}_{N_i}(Z_{i-1})$ and
the integer $f_i:= \deg (N_i\cap Z_{i-1}) $ with $f_i$ maximal among all hyperplanes. We have $f_i\ge f_{i+1}$ for all $i\ge 3$ and if $f_i\le 3$, then $f_{i+1}=0$
and $Z_i=\emptyset$. We have the residual exact sequence similar to (\ref{eqc1}) with $N_i$ instead of $H_i$, $Z_{i-1}$ instead of $W_{i-1}$ and $Z_i$ instead of $W_i$.
Hence there is an integer $i>0$ such that $h^1(N_i,\mathcal {I}_{N_i\cap Z_{i-1},H_i}(d+1-i)) >0$ and we call $g'$ the first such an integer.
Since $\sum _{i\ge 2} f_i \le 3d-2$ and $f_{i+1}=0$ if $f_i\le 3$, we have $g'\le d+1$. Assume for the moment $g'\le d+1$. Hence either $f_{g'} \ge 2(d+1-g')+2$ or there is a line $R\subset N_{g'}$
such that $\deg (R\cap Z_{i-1})\ge d+3-g'$. Assume for the moment $3 \le g' \le d$. We get $f_{g'-1} \ge d+5-g'$ (\cite[Lemma 34]{bgi}). Hence $3d-2 \ge \sum _{i\ge 2} f_i \ge (g'-1)(d+5-g') -1$. As in step
(b) we also exclude the case $g'=d$.
We get $g' \le 3$ if $g\le d$.

Now assume $g' =d+1$, i.e. assume $\deg (W_2)\ge 2$. Since $f_1\ge d+3$ and $f_i\ge 4$ for $2\le i \le d$, we get $4d+1 \ge d+3 +4(d-1) +2$, a contradiction.

\quad (b2.2.1) Assume $g'=3$. Since $f_1\ge d+3$, $f_2\ge f_3$ and $f_1+f_2+f_3 \le 4d+1$, we get $f_3\le (3d-2)/2 \ge 2(d-2)+1$ (since $d\ge 4$).
By \cite[Lemma 34]{bgi} there is a line $R\subset N_3$ such that $\deg (R\cap Z_2)\ge d$. Since $f_3>0$, we also see that $Z_1$ spans $\mathbb {P}^4$.
Hence $f_2\ge d+2$. Therefore $f_1 \le 2d-1$. Let $M_1$ be a hyperplane containing $L\cup R$ and with $h_1:= \deg (M_1\cap 0)$
maximal. We have $f_1\ge h_1$. If $L\cap R =\emptyset$, then $h_1\ge \deg (L\cap W_0)+\deg (R\cap W_0) \ge 2d+1$, a contradiction. Now
assume $L\cap R \ne \emptyset$. We have $R\ne L$, because $R\cap (B\setminus B\cap L)\ne \emptyset$. Since $L\cap R$ is scheme-theoretically a point,
we have $\deg (W_0\cap (L\cup R)) \ge \deg (W_0\cap L)+\deg (W_0\cap R)-1$. Hence
$h_1\ge 1+ \deg (W_0\cap L)+\deg (W_0\cap R)-1\ge 2d$, a contradiction.

\quad (b2.2.2) Assume $g'=2$. Since $e_2\ge d+1$, we have $e_1\le 3d$.
Since $f_1\ge d+3$, we have $f_2\le 3d-2 = 3(d-1)+1$. By Lemma \ref{w1} and Remark \ref{w2} either
there is a line $R\subset N_2$ with $\deg (R\cap Z_1)\ge d+1$ or there is a reduced conic $T\subset N_2$ with $\deg (Z_2\cap T) \ge 2d$
or there is reduced plane cubic $F\subset N_2$ with $\deg (F\cap N_2) \ge 3d-3$.

\quad (b2.2.2.1) Assume the existence of a reduced plane cubic $F\subset N_2$ with $\deg (F\cap N_2) \ge 3d-3$ and take any hyperplane $M$ containing
$F$ and another point of $B$ (it exists, because $\langle B\rangle =\mathbb {P}^4$ by \cite[Exercise 3.2.2.2]{l}). By \cite[Lemma 5.1]{bb2} we have $h^1(\mathcal {I}_{\mbox{Res}_M(W_0)}(d-1)) >0$. Since $\deg (\mbox{Res}_M(W_0)) \le d+4 \le 2(d-1)+1$ ($d\ge 5$),
there is a line $R\subset \mathbb {P}^4$ such that $\deg (R\cap \mbox{Res}_M(W_0))\ge d+1$. The case $t=1$ of Remark \ref{uno.1} implies
$O\in R$. Since $\deg (R\cap A)\le 2$, we have $\sharp (B\setminus B\cap M) \ge d-1$ with equality
only if $R$ is the tangent line of $C$ at $O$. Assume for the moment $O\notin F$. We get
$\sharp (B\cap M) \ge 1+ \sharp (B\cap F)\ge 3d-2$ and hence $\sharp (N) \ge 3d-3$, a contradiction. Now assume $O\in F$. Since $\langle F\cup R\rangle \ne \mathbb {P}^4$,
we get $e_1 \ge (3d-3)+d-1$, a contradiction.

\quad (b2.2.2.2) Assume the existence of a reduced conic $T\subset N_2$ with $\deg (Z_2\cap T) \ge 2d$; if $T$ is reducible also assume that $\deg (J\cap Z_2)=d$
for every component $J$ of $T$ (Remark \ref{w1}). First
assume that $L$ is an irreducible component of $T$; since $L\subset N_1$, we have $\deg (L\cap Z_1)\le \deg (L\cap A)\le 2$, a contradiction. Now assume that $L$ is not an irreducible component of $T$, but that $L\subset \langle T\rangle$; this case is done as in step (b2.2.2.1). Now assume $L\cap \langle T\rangle =\emptyset$ and hence $O\notin \langle T\rangle$. Let $Q$ be a quadric hypersurface containing $T\cup L$ and at least one other
point of $B$ (it exists, because $h^0(\mathcal {I}_{L\cup T}(2)) = 7$. Since $\deg (W_0)-\deg (W_0\cap Q) \le d-2$, then $h^1(\mathcal {I}_{\mbox{Res}_Q(W_0)}(d-2)) =0$ and hence \cite[Lemma 5.1]{bb2} gives $W_0\subset Q$.
Hence $h^0(\mathcal {I}_{A\cup L\cup T}(2)) \ge 6$. Since $O\notin \langle T\rangle$, we have $h^0(\mathcal {I}_{A\cup T}(2)) =5$, a contradiction.

Now assume $L\cap \langle T\rangle \ne \emptyset$ and
call $N$ any hyperplane containing $T\cup L$. Either by a residual exact sequence (case $h^1(N,\mathcal {I}_{W_0\cap N}(d)) =0$)
or quoting \cite[Lemma 5.1]{bb2} we get $h^1(\mathcal {I}_{\mbox{Res}_N(W_0)}(d-1)) >0$. Since $\deg (\mbox{Res}_N(W_0)) \le d+1$, there is a line
$R\subset \mathbb {P}^4$ such that $\deg (R\cap \mbox{Res}_N(W_0))\ge d+1$. We have $L\ne R$. Remark \ref{uno.1} gives $O\in R$ and 
$\sharp (R\cap (B\setminus N)) \ge d-1$ with strict inequality, unless $\sharp (B\cap L) =d$. Let $Q$ be any quadric surface containing the two conics $L\cup R$ and
$T$ (it exists, because $h^0(\mathcal {I}_{L\cup R\cup T}(2)) \ge 5$). Since $\deg (\mbox{Res} _Q(W_0)) \le 5$, we have $h^1(\mathcal {I}_{\mbox{Res} _Q(W_0)}(d-2))=0$
and hence \cite[Lemma 5.1]{bb2} gives $W_0\subset Q$. Since $A$ is curvilinear, we get $A\cup B\subset L\cup R\cup T$. Since $A\subset B\cup L\cup R\cup T$, the reduced
conic must be reducible and the four lines of $L\cup R \cup T$ pass through the point $O_1$. Moreover, the union of any 3 of these lines span a hyperplane. 
 Write $L\cup R \cup T = L_1\cup L_2\cup L_3\cup L_4$ with each $L_i$ a line.
We have $\sharp (B\cap (L_i\setminus \{O_1\}))+1 \le \deg (L_i\deg (L_i\cap W_0) \le \sharp (B\cap (L_i\setminus \{O_1\}))+2$ and the second inequality is not strict
if and only if $L_i$ is the tangent line of $C$. Hence the second inequality is not strict for at most one index $i$. If $T=L_i\cup L_j$ we also have
$\deg (T\cap W_0) \le \deg (T\cap L_i)+\deg (L_j\cap W_0)$. We get $\sharp (B\setminus \{O_1\}) \ge -5+(d+1)+(d+1)+2d =4d-3$, a contradiction.

\quad (b2.2.3) Assume $g'=1$.

\quad (b2.2.3.1) Assume $\deg (L\cap W_0)\ge d+2$. By the case $t=1$ of Remark \ref{uno.1} we have $\sharp (B\cap L) = d$, $O\notin B$
and $L$ is the tangent line to $C$ at $O$. The maximality property of $N_1$ implies $f_1\ge d+4$ and hence $\deg (Z_1) \le 3d-3$. Since $W_0\nsubseteq N_1$,
then $h^1(\mathcal {I}_{Z_1}(d-1)) >0$. By Lemma \ref{w1} and Remark \ref{w2} either
there is a line $R\subset \mathbb {P}^4$ with $\deg (R\cap Z_1)\ge d+1$ or there is a reduced conic $T\subset \mathbb {P}^4$ with $\deg (Z_1\cap T) \ge 2d$
(if $T$ is reducible we may also assume $\deg (J\cap W_0)=d$ for each irreducible component $J$ of $T$)
or there is reduced plane cubic $F\subset \mathbb {P}^4$ with $\deg (F\cap Z_1) \ge 3d-3$. 

\quad (b2.2.3.1.1) Assume the existence of $F$. In particular we have $e_1\ge 3d-2$. Fix any plane $M\supset L$. Since $\deg ((H_1\cap M)\cap W_0) \ge
4d-1$, we have $h^1(\mathcal {I}_{\mbox{Res}_{H_1\cup M}(W_0)}(d-2)) =0$ and hence $W_0\subset H_1\cup N$. Recall that in this step
$L$ is the tangent line to $C$ at $O$. Set $c:= \deg (A\cap H_1)$. Since $L\notin H_1$, we have $c\le 1$. Since $A\subset H_1\cup N$
for each hyperplane containing $L$, we get $c+2\ge 5$, a contradiction.

\quad (b2.2.3.1.2) Assume the existence of $T$. Since $L\subset N_1$ and $\deg (J\cap Z_1) >\deg (J\cap A)$ for any line $J\subset T$, the set $T\cap L$
is finite. Assume for the moment $L\subset \langle T\rangle$. Since $\deg (W_0\cap \langle T\rangle ) \ge d +\deg (T\cap Z_1) \ge 3d$,
we get $e_1\ge 3d+1$ and hence $e_2\le d$, a contradiction. Now assume $L\nsubseteq \langle T\rangle$ and $L\cap \langle T\rangle \ne \emptyset$.
Let $M$ be the hyperplane spanned by $L\cup T$. Since $\langle T\rangle$ does not contain the tangent line, $L$, of $C$ at $O$, we have
$\deg (A\cap T) \le 1$. Hence $\sharp (B\cap (M\setminus \{O\})) \ge 3d-1$. Hence $e_1\ge 3d$, a contradiction.

Now assume $L\cap \langle T\rangle =\emptyset$. We have $\deg (W_0\cap (L\cup T)) \ge 3d+2$. Since $O\in L$, we have $T\cap A=\emptyset$
and hence $A\nsubseteq T\cup L$. We have $h^0(\mathcal {I}_{L\cup T}(2)) = 7$ and $T\cup L$ is the scheme-theoretic base
locus of $|\mathcal {I}_{L\cup T}(2)|$. Fix any $Q\in |\mathcal {I}_{L\cup T}(2)|$. Since $\deg (\mbox{Res}_Q(W_0)) \le d-1$, we have
$h^1(\mathcal {I}_{\mbox{Res}_Q(W_0)}(d-2)) =0$ and hence $W_0\subset Q$. Moving $Q$ in $|\mathcal {I}_{L\cup T}(2)|$ we get $W_0\subset L\cup T$, a contradiction.

\quad (b2.2.3.1.3) Assume the existence of $R$. Since $R\ne L$ and $L$ is the tangent line to $C$ at $O$, Remark \ref{uno.1} gives $O\in R$ and $\sharp (B\cap (R\setminus \{O\}) =d$. 
Let $M\subset \mathbb {P}^4$ be a hyperplane containing $R\cup L$ and with maximal $\deg (M\cap W_0)$. Since $A\nsubseteq M$, we have $h^1(\mathcal {I}_{\mbox{Res}_M(W_0)}(d-1)) >0$ (\cite[Lemma 5.1]{bb2}). Since $h \ge 1+\deg (W_0\cap \langle R\cup L\rangle )\ge
2d+3$, we have $\deg (\mbox{Res}_M(W_0)) \le 2d-2$. Hence there is a line $D\subset \mathbb {P}^4$ such that $\deg (D\cap \mbox{Res}_M(W_0))\ge d+1$.
Since $L\cup R\subset M$, then $D\cap (B\setminus B\cap M)\ne \emptyset$ and in particular $D\ne L$ and $D\ne L$. Since $L$ is the tangent line to $C$ at $O$, Remark \ref{uno.1} gives $O\in D$ and $\sharp (B\cap (D\setminus \{O\})) =d$. Let $N$ be a hyperplane containing $D\cup R\cup L$. We have $\deg (N\cap W_0) \ge 3d+2$
and hence $h^1(\mathcal {I}_{\mbox{Res}_N(W_0)}(d-1)) =0$. Hence $A\subset N$, a contradiction.

\quad (b2.2.3.2) Assume $\deg (L\cap W_0) =d+1$. By \cite[Lemma 34]{bgi} we have $f_1\ge 2d+2$. Since $W_0\nsubseteq N_1$, we have $h^1(\mathcal {I}_{Z_1}(d-1)) >0$
(\cite[Lemma 5.1]{bb2}). Since $\deg (Z_1) \le 4d+1-f_1\le 2(d-1)+1$, there is a line $R\subset \mathbb {P}^4$ such that $\deg (R\cap Z_1) \ge d+1$. The case
$t=1$ of Remark \ref{uno.1} gives $O\in R$. Since $L\subset N_1$ and $d+1\ge 3$, we have $R\cap (B\setminus B\cap L) \ne \emptyset$
and hence $L\ne R$. Let $M$ be
the hyperplane containing $L\cup R$ and with maximal $g_1:= \deg (M\cap W_0)$. Since $\langle R\cup L\rangle$ is a plane, we have $g_1\ge 1+
\deg (W_0\cap \langle L\cup R\rangle ) \ge 2d$. Since $W_0\nsubseteq M$, we get $h^1(\mathcal {I}_{\mbox{Res}_M(W_0)}(d-1)) >0$.
Since $\deg (\mbox{Res}_M(W_0)) \le 2d+1$, either there is a line $D\subset \mathbb {P}^4$ with $\deg (D\cap \mbox{Res}_M(W_0))\ge d+1$ or there is a conic
$T\subset \mathbb {P}^4$ with $\deg (T\cap \mbox{Res}_M(W_0))\ge 2d$.

\quad (b2.2.3.2.1) Assume the existence of $T$. First, assume $L\subset \langle T\rangle$. Since $\deg (W_0\cap \langle T\rangle ) \ge d +\deg (T\cap Z_1) \ge 3d$,
we get $e_1\ge 3d+1$ and hence $e_2\le d$, a contradiction. Now, assume $L\nsubseteq \langle T\rangle$ and $L\cap \langle T\rangle \ne \emptyset$.
Let $M$ be the hyperplane spanned by $L\cup T$. Since $\langle T\rangle$ does not contain the tangent line, $L$, of $C$ at $O$, we have
$\deg (A\cap T) \le 1$. Hence $\sharp (B\cap (M\setminus \{O\})) \ge 3d-1$. Hence $e_1\ge 3d$, a contradiction. Now, assume $L\cap \langle T\rangle =\emptyset$.
 We have $\deg (W_0\cap (L\cup T)) \ge 3d+1$. Since $O\in L$, we have $T\cap A=\emptyset$
and hence $A\nsubseteq T\cup L$. We have $h^0(\mathcal {I}_{L\cup T}(2)) = 7$ and $T\cup L$ is the scheme-theoretic base
locus $|\mathcal {I}_{L\cup T}(2)|$. Fix any $Q\in |\mathcal {I}_{L\cup T}(2)|$. Since $\deg (\mbox{Res}_Q(W_0)) \le d$, either $h^1(\mathcal {I}_{\mbox{Res}_Q(W_0)}(d-2)) =0$
or $\deg (\mbox{Res}_Q(W_0))=d$ and there is a line $J\supset \mbox{Res}_Q(W_0)$. In the former
case we have
 $W_0\subset Q$ and hence $W_0\subset L\cup T$, a contradiction. Now assume $\deg (\mbox{Res}_Q(W_0))=d$ and $J\supset \mbox{Res}_Q(W_0)$ for some line
 $J$. Since $\deg (A\cap J)\le 2$, we have $J\cap (B\setminus B\cap Q)\ne \emptyset$. Hence $J\ne L$ and $J$ is not a component of $T$. Since $L\cap \langle T\rangle =\emptyset$, we have $\sharp (B\cap (L\cup T \cup J)) \ge d-1+2d+d-2$, a contradiction.

\quad (b2.2.3.2.2) Assume the existence of $D$. Remark \ref{uno.1} gives $O\in D\cap L$. Since $f_1\le e_1\le 3d$ and $g'=1$, either there is a line $R'\subset N_1$ with $\deg (R'\cap Z_0)
\ge d+2$ or there is a conic $T'\subset N_1$ with $\deg (Z_0\cap T')\ge 2d+2$ or there is a plane cubic $F'\subset N_1$ with $\deg (Z_0\cap F') \ge 3d$
(Lemma \ref{w1}). The last case cannot occur, because it would give $f_1\ge 3d+1$ and hence $e_1\ge 3d+1$.

\quad (b2.2.3.2.2.1) Assume the existence of $R'$. Remark \ref{uno.1} gives that $R'$ is the tangent line of $C$ at $O$. Since $\deg (L\cap W_0) =d+1$, we have $R'\ne L$. Since $R'\subset N_1$ and $D\cap (B\setminus B\cap N_1)\ne \emptyset$,
we get $D\ne R'$. Remark \ref{uno.1} gives $O\in R'$ and that $R'$ is the tangent line of $C$ at $O$. Let $M\subset \mathbb {P}^4$ be any hyperplane containing
$D\cup L	\cup R'$. Since at most one of the lines $L, D, R'$ is tangent to $C$ at $O$, we have $\sharp ((B\setminus \{O\})\cap M) \ge 3d$ and $e_1\ge
\deg (W_0\cap M)\ge 3d+2$, a contradiction.

\quad (b2.2.3.2.2.2) Assume the existence of $T'$. If $O\notin T'$, then we get $\sharp (B\cap (T'\cup L\cup R)) \ge 4d-2$, a contradiction. Hence $O\in T'$.
Since $\sharp (B) \le 4d-4$, we also get that $\langle T'\rangle$ contains the tangent line to $C$ at $O$. We have $\sharp (B\cap (T\setminus \{O\})) \ge 2d-3$. If either $L\subset \langle T'\rangle$
or $R\subset \langle T\rangle$, then we get $e_1>\deg (W_0\cap \langle T\rangle ) \ge 3d$, a contradiction. If $L\nsubseteq \langle T'\rangle$
and $R\nsubseteq \langle T\rangle$, then neither $L$ nor $R$ is tangent to $C$ at $O$ and hence $\sharp ((B\setminus \{O\})\cap R) \ge d$ and $\sharp ((B\setminus \{O\})\cap L) \ge d$. Therefore $\sharp (B\setminus \{O\})\ge 4d-3$, a contradiction.

\quad (b3) Assume $g=1$. Since $A\nsubseteq H_1$, \cite[Lemma 5.1]{bb2} gives $h^1(\mathcal {I}_{W_1}(d-1)) >0$ and in particular
$\deg (W_1) \ge d+1$. Hence $e_1\le 3d$. By Lemma \ref{w1} and Remark \ref{w2} either there is a line $L\subset H_1$ such that $\deg (L\cap W_0) \ge d+2$
or there is a reduced conic $T\subset H_1$ such that $\deg (T\cap W_0) \ge 2d+2$
or there is a plane cubic $F\subset H_1$ with $\deg (W_0\cap F) \ge 3d$.

\quad (b3.1) Assume the existence of $F$. Since $A$ spans $\mathbb {P}^4$, we get $e_1> \deg (F\cap W_0) \ge 3d$, a contradiction.

\quad (b3.2) Assume the existence of $T$. Since $T$ is a plane curve, we have $\deg (T\cap A)\le 3$ and hence $\sharp (T\cap (B\setminus \{O\}) \ge 2d-1$. Remark \ref{uno.1} for $t=2$ gives $O\in T$ and that $T$ and $C$ have the same tangent line
at $O$. Since $A\nsubseteq H_1$, \cite[Lemma 5.1]{bb2} gives
$h^1(\mathcal {I}_{W_1}(d-1)) >0$ and hence $e_1 \le 4d+1-\deg (W_1) \le 3d$. Since $A$ spans $\mathbb {P}^4$, we get $e_1> \deg (W_0\cap T)$. Hence $\deg (W_1) \le 2d-2$. Therefore there is a line
$R\subset \mathbb {P}^4$ such that $\deg (R\cap W_1) \ge d+1$. Since $R\cap (B\setminus B\cap H_1)\ne \emptyset$, we have $R\nsubseteq H_1$ and
in particular $R\nsubseteq \langle T\rangle$. Hence $R$ is not tangent to $C$ at $O$. Remark \ref{uno.1} gives $O\in R$ and $\sharp (R\cap (B\setminus \{O\})) =d$.
Hence $M:= \langle T\cup R\rangle$ is a hyperplane, $e_1\ge \deg (M\cap W_0) \ge \deg (T\cap W_0) +\sharp (R\cap (B\setminus \{O\}))\ge 3d+2$, a contradiction.

\quad (b3.3) Assume the existence of $L$. Since $A$ spans $\mathbb {P}^4$ we have $e_1\ge d+4$. Remark \ref{uno.1} gives $O\in L$, that $L$ is the tangent line of $C$ at $O$ and that $\sharp (B\cap (L\setminus \{O\})) =d$ (it also gives $O\notin B$). Since $A\nsubseteq H_1$, \cite[Lemma 5.1]{bb2} gives $h^1(\mathcal {I}_{W_1}(d-1)) >0$.
We have $e_1+\deg (W_1) \le 4d+1$ and in particular $\deg (W_1) \le 3d-3$. Lemma \ref{w1} gives either the existence of a line $R\subset \mathbb {P}^4$ with
$\deg (R\cap W_1)\ge d+1$ or the existence of a conic $T'\subset \mathbb {P}^4$ with $\deg (T'\cap W_1) \ge 2d$ or the existence of a plane cubic $F$
with $\deg (F\cap W_1)\ge 3d-3$. The latter case is impossible, because it would give $e_1\ge 3d-2$ and hence $\deg (W_1)\le d+4$. Since $\deg (W_1)\ge d+1$, then
$e_1\le 3d$.

\quad (b3.3.1) Assume the existence of the line $R$. Since $R\cap (B\setminus B\cap H_1) \ne \emptyset$, we have $R\nsubseteq H_1$ and hence $R\ne L$.
Since $L$ is the tangent line of $C$ at $O$, we have $\deg (R\cap A)\le 1$. Remark \ref{uno.1} gives $O\in R$, $O\notin B$ and $\sharp (B\cap R) =d$. Let
$M\subset \mathbb {P}^4$ be a hyperplane containing $R\cup L$ and with maximal $h:= \deg (M\cap W_0)$. Since $A\nsubseteq M$, we have
$h^1(\mathcal {I}_{\mbox{Res}_M(W_0)}(d-1)) >0$. We have $h \ge 2d+3$ and hence
$\deg (\mbox{Res}_M(W_0)) \le 2d-2$. Therefore there is a line $D\subset \mathbb {P}^4$ such that $\deg (D\cap \mbox{Res}_M(W_0))\ge d+1$.
Since $D\cap (B\setminus B\cap M) \ne \emptyset$, we have $D\nsubseteq \langle R\cup L\rangle$. Since $L$ is the tangent line
of $C$ at $O$, Remark \ref{uno.1} gives $O\in D$ and $\sharp (D\cap B) =d$. Let $N$ be the hyperplane spanned by $D\cup R\cup L$.
Since $O\notin B$, we have $e_1\ge \deg (M\cap W_0) \ge d+2+d+d$, contradicting the inequality $e_1\le 3d$ proved in step (b3.3).

\quad (b3.3.2) Assume the existence of the conic $T'$ but the non-existence of any line $R$ with $\deg (R\cap W_1)\ge d$. If $T$ is reducible,
then $\deg (J\cap W_1)=d$ for each component $J$ of $T'$ (Remark \ref{w2}). Since $A$ spans $\mathbb {P}^4$, we have $e_1> \deg (T'\cap W_1)$. Therefore $e_1=2d+1$,
$\deg (T'\cap W_1) =2d$ and $W_1\subset T'$.  Since $e_1\le 3d-2$, we have
$L\cap \langle T'\rangle = \emptyset$. We have $h^0(\mathcal {I}_{L\cup T'}(2)) \ge 7$. Let $Q$ be any quadric hypersurface such that
$\deg (W_0\cap Q)> \deg (W_0\cap (L\cup T'))$. Since $\deg (\mbox{Res}_Q(W_0)) \le 4d+1-(3d+2)$, we have $h^1(\mathcal {I}_{\mbox{Res}_Q(W_0)}(d-2)) =0$
and hence $A\subset Q$ (\cite[Lemma 5.1]{bb2}). Since $L\cap \langle T'\rangle = \emptyset$, then $L\cup T'$ is the scheme-theoretic base-locus of $|\mathcal {I}_{L\cup T'}(2)|$. Since $A$ is connected, we get $A\subset L$, a contradiction.

\quad (c) Proposition \ref{w3} covers the case $b_1=2$ and $b_2=b_3=b_4=1$. Proposition \ref{w5.0} covers the case $b_1=b_2 =2$ and $b_3=1$. Proposition \ref{w5} covers the case $b_1=3$ and $b_2=b_3=1$. Proposition \ref{w4} covers the case $b_1=3$ and $b_2 =2$.

\quad (d) Now assume $s=2$, $b_1=4$, $b_2=1$. Assume $\sharp (B) \le 3d-2$ and hence $\deg (W_0) \le 3d+3$. Write $A = A_1\sqcup \{O_2\}$ and set $\{O_1\}:
=(A_1)_{red}$. We repeat the proof of Proposition \ref{w4}, except that now $\{O_2\}$ is the unique reduced connected component of $A$ and hence we cannot freely use \cite[Lemma 5.1]{bb2}; however,
$\deg (W_0) <3d+4$. We write (++xxxx) for the set-up of step (xxx) of the proof of Proposition \ref{w4} and use the notation of that step.

\quad (++b1.1.1) Assume $h^1(\mathcal {I}_{\mbox{Res}_M(W_0)}(d-1)) =0$. We get $O_2\in B$ and $A_1\cup B _1\subset M$, where $B_1:= B\setminus \{O_2\}$.
Since $P\ne \nu _d(O_2)$, we get that $\langle \nu _d(A_1)\rangle \cap \langle \nu _d(B_1)\rangle \ne \emptyset$, that $\nu _d(O_2)\notin \langle \nu _d(A_1)\rangle \cap \langle \nu _d(B_1)\rangle$ and that $\langle \nu _d(A)\rangle \cap
\langle \nu _d(B)\rangle$ is the linear span of $\nu _d(O_2)$ and $\langle \nu _d(A_1)\rangle \cap \langle \nu _d(B_1)\rangle$.
Hence there is a unique $P_1\in \langle \nu _d(A_1)\rangle \cap \langle \nu _d(B_1)\rangle$ such that $P\in \langle \{P_1,O_2\}$. Since $P\notin \langle W\rangle$
for any $W\subseteq A$ and any $P\subsetneq W$, we get $P_1\notin \langle A'\rangle$ for any $A'\subsetneq A'$ and any $A'\subsetneq B$.
Since $A_1$ evinces the scheme-rank of $P_1$, \cite[Proposition 5.19]{bb2} gives $r_{m,d}( P_1) = 3d-2$.

\quad (++b1.2) If $A\cup B\subset Q$, then that proof works. Assume $A\cup B\nsubseteq Q$. We get $O_2\in B$ and $A_1\cup B_1\subset Q$, where
$B_1:= B\setminus \{O_2\}$. Varying $Q$ we get $A_1\cup B_1\subset F\cup L$. Since $F\cap L =\emptyset$ and $\langle F\rangle$ is a plane, we have
$A_1\nsubseteq F\cup L$, a contradiction.

\quad (++b2) Assume $h^1(\mathcal {I}_{Z_1}(d-1)) =0$. We repeat step (++b1.1.1) with the hyperplane $N_1$ instead of the hyperplane $M$.

\quad (++c) Assume $h^1(\mathcal {I}_{W_1}(d-1)) =0$. We repeat step (++b1.1.1) with the hyperplane $H_1$ instead of the hyperplane $M$.

\quad (++c1) Assume $h^1(\mathcal {I}_{\mbox{Res}_M(W_0)}(d-1)) >0$. We repeat step (++b1.1.1).\end{proof}

\begin{proposition}\label{w3}
Fix integers $d, s, m $ such that $0 \le s \le m-1$, and $d\ge m$. Fix a degree $2$ connected zero-dimensional scheme $A_1\subset \mathbb {P}^m$ and a set
$S\subset \mathbb {P}^m$ such that $S\cap A_1=\emptyset$, $\sharp (S) = s$ and the scheme $A:= A_1\cup S$ is linearly independent. Set $\{O\}:= (A_1)_{red}$.
Fix any $P\in \langle \nu _d(A)\rangle$ such that $P\notin \langle \nu _d(A')\rangle$ for any $A'\subsetneq A$.
Then $r_{m,d}( P) =s+d$ and every $B\in \mathcal {S} ( P)$ is the disjoint union of $S$ and $d$ points of $\langle A_1\rangle \setminus \{O\}$.
\end{proposition}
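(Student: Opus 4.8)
The plan is to combine Sylvester's theorem on the degree $d$ rational normal curve with an induction on $s$ that peels off the reduced points of $S$ one at a time, the base case $s=0$ being a single tangent vector. First I would reduce to $m=s+1$ by concision (\cite[Exercise 3.2.2.2]{l}): since $A$ is linearly independent and spans a subspace $M$ of dimension $s+1$, and $P\in\langle\nu_d(A)\rangle$ satisfies $P\notin\langle\nu_d(A')\rangle$ for every $A'\subsetneq A$, I may take $M=\mathbb P^m$ with $m=s+1$. Write $\ell:=\langle A_1\rangle$, a line through $O$ carrying the tangent direction of the jet $A_1$, so that $\nu_d(\ell)$ is a degree $d$ rational normal curve and $\nu_d(A_1)$ is a tangent vector to it at $\nu_d(O)$. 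As $\langle\nu_d(A)\rangle$ is the internal join of $\langle\nu_d(A_1)\rangle$ and the independent points $\{\nu_d(q)\}_{q\in S}$, I can write $P$ uniquely as $P\in\langle P_1,\nu_d(S)\rangle$ with $P_1\in\langle\nu_d(A_1)\rangle$; the hypothesis on $P$ forces $P_1\neq\nu_d(O)$ (take $A'=\{O\}\cup S$) and forces each point of $S$ to occur with nonzero coefficient.

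For the upper bound $r_{m,d}(P)\le s+d$, I would apply Sylvester's theorem (\cite{cs}, \cite[Theorem 4.1]{lt}, \cite[Theorem 23]{bgi}) to the point $P_1$ on the tangent developable of $\nu_d(\ell)$: a tangent vector of a degree $d$ rational normal curve has rank exactly $d$, evinced by $d$ distinct points of $\ell\setminus\{O\}$. Writing $P_1=\sum_{i=1}^d\nu_d(p_i)$ with $p_i\in\ell\setminus\{O\}$ and adjoining $S$ gives $P\in\langle\nu_d(S\cup\{p_1,\dots,p_d\})\rangle$, hence $r_{m,d}(P)\le s+d$.

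The substantive part is the lower bound together with the description of $\mathcal S(P)$. Let $B\in\mathcal S(P)$ and set $W_0:=A\cup B$; by the upper bound I may assume $\sharp(B)\le s+d$, so $\deg(W_0)\le 2s+d+2\le 3d+1$ (using $s\le m-1\le d-1$). Since $A$ is non-reduced while $B$ is reduced, $A\neq B$, whence $h^1(\mathcal I_{W_0}(d))>0$ by \cite[Lemma 1]{bb1}. I would induct on $s$. The base case $s=0$ is again Sylvester's theorem: $P=P_1$ is a tangent vector, $\mathbb P^m=\ell$, and every minimal decomposition consists of $d$ points of $\ell\setminus\{O\}$. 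For the inductive step I fix $q\in S$ and split $P=P'+c_q\nu_d(q)$ with $c_q\neq0$ and $P'\in\langle\nu_d(A')\rangle$, $A':=A_1\sqcup(S\setminus\{q\})$, where $P'\notin\langle\nu_d(A'')\rangle$ for $A''\subsetneq A'$ (otherwise $P\in\langle\nu_d(A''\cup\{q\})\rangle$, contradicting minimality). By induction $r_{m,d}(P')=(s-1)+d$, with every optimal decomposition of the shape $(S\setminus\{q\})\sqcup\{d\text{ points of }\ell\setminus\{O\}\}$. It then suffices to show that every $B\in\mathcal S(P)$ contains $q$ and that $B\setminus\{q\}$ evinces the rank of $P'$; this yields $r_{m,d}(P)=s+d$ and the claimed form of $B$.

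The main obstacle is exactly this \emph{absorption/separation} step: the isolated reduced point $q$ must not be tradeable in an optimal decomposition (so $q\in B$), and the remaining points of $B$ must all lie on $\ell\setminus\{O\}$. I would establish it through the residual sequence (\ref{eqres}): choose a hyperplane $H\supseteq\ell\cup(S\setminus\{q\})$ with $q\notin H$ (possible by linear independence of $A$), so that $\mathrm{Res}_H(A)=\{q\}$ and $\mathrm{Res}_H(W_0)=\{q\}\sqcup(B\setminus B\cap H)$. Feeding $h^1(\mathcal I_{W_0}(d))>0$ into the sequence and controlling the residual degrees forces, via Lemma \ref{w1} (or the small-degree Remark \ref{w3.0} when $s$ is low), a line $L$ with $\deg(L\cap W_0)\ge d+2$; Remark \ref{uno.1} for $t=1$ then identifies $L$ with $\ell$, pins $B\cap\ell$ to $d$ points with $O\notin B$, and—using \cite[Lemma 5.1]{bb2} as in \cite[Proposition 5.19]{bb2}—shows that the off-$\ell$ part of $B$ is precisely $S$. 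I expect the bookkeeping of the residual degrees, and the exclusion of the reduced conic and plane-cubic alternatives offered by Lemma \ref{w1}, to be the delicate points, handled exactly as in steps (b2)--(b3) of the proof of Theorem \ref{i2}.
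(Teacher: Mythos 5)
Your setup (concision to $m=s+1$, the unique splitting $P\in\langle\{P_1\}\cup\nu_d(S)\rangle$, and the upper bound $r_{m,d}(P)\le s+d$ via Sylvester) matches the paper, and the idea of peeling off one reduced point by a hyperplane residual is also the paper's engine (it inducts on $m$, which is your induction on $s$). But the absorption step --- the whole content of the proposition --- is only sketched, and the sketch has a concrete gap. You assert that once a line $L$ with $\deg(L\cap W_0)\ge d+2$ is produced, ``Remark \ref{uno.1} for $t=1$ identifies $L$ with $\ell$.'' It does not: Remark \ref{uno.1} only gives $\sharp(B\cap L)\le d$, hence $\deg(A\cap L)\ge 2$, hence $\deg(A\cap L)=2$ by linear independence of $A$. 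Since $A$ is \emph{disconnected}, a degree-$2$ subscheme of $A$ lying on a line need not be $A_1$: $L$ could be the line through two points of $S$, or through $O$ and a point of $S$. Ruling these out requires additional arguments that you have not supplied, and your fallback --- ``handled exactly as in steps (b2)--(b3) of the proof of Theorem \ref{i2}'' --- does not transfer, because those steps exploit the connectedness of the degree-$5$ scheme there (every degree-$2$ subscheme on a line automatically contains $O$) and the specific numerology of $\mathbb P^4$ with $d\ge 9$. Relatedly, your appeal to Lemma \ref{w1} (which needs $d\ge 6$ and offers conic and plane-cubic alternatives you would then have to exclude) is not available under the hypothesis $d\ge m$ alone. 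Finally, the plan is structurally muddled: if the direct identification of $B\cap\ell$ and $B\setminus\ell$ worked, the induction on $s$ would be superfluous; if the induction is the engine, you must actually prove $q\in B$ and $B\setminus\{q\}\in\mathcal S(P')$, which the hyperplane $H=\langle\ell\cup(S\setminus\{q\})\rangle$ argument as written does not do (the residual branch $h^1(\mathcal I_{\mathrm{Res}_H(W_0)}(d-1))>0$, where $\mathrm{Res}_H(W_0)$ can contain up to $\sharp(B)$ points of $B$ off $H$, is not addressed).

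The paper avoids all of this by choosing the hyperplanes $H_i$ to have \emph{maximal} intersection with $W_0$, rather than a prescribed one. With $\deg(W_0)\le d+2m$ and the maximality giving $e_1\ge e_2\ge\cdots$ together with $e_i\ge d+m+2-g$ whenever the previous residual spans, a two-line degree count (using only Remark \ref{w3.0}, never Lemma \ref{w1}) forces $g=1$ and then $h^1(\mathcal I_{W_1}(d-1))=0$; at that point \cite[Lemma 5.1]{bb2} yields $A_1\subset H_1$ and $S\setminus S\cap H_1=B\setminus B\cap H_1=$ a single point $Q$, and Grassmann's formula hands the problem to the inductive hypothesis in $H_1$. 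No line ever has to be identified with $\ell$ and no conic or cubic alternative ever arises. If you want to salvage your version, replace your fixed $H$ by the maximal-intersection hyperplane and redo the degree bookkeeping; as written, the argument does not close.
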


\begin{proof}
By concision (\cite[Exercise 3.2.2.2]{l}) it is sufficient to do the case $\langle A\rangle =\mathbb {P}^m$, i.e. the case $m =s+1$. Since the case $m=1$ is true by Sylvester's theorem
(\cite{cs}),
we may assume $m\ge 2$ and use induction on $m$.
Since $h^1(\mathcal {I}_A(d)) =0$ and $A$ has only finitely many proper subschemes, $P$ exists and there is a unique point $Q\in \langle \nu _d(A_1)\rangle \setminus \{O\}$ such that $P\in \langle \{Q\}\cup \nu _d(S)\rangle$. Since $r_{1,d}(Q) =d$ by a theorem of Sylvester, we get  $r_{m,d}( P) \le s+d$. Fix $B\in \mathcal {S}( P)$. 
Set $W_0:= A\cup B$. We have $\deg (W_0) \le d+2s+2 =d+2m$. Since $A$ is not reduced, we have $A\ne B$ and hence $h^1(\mathcal {I}_{W_0}(d)) >0$. Take $H_i,e_i,W_i, g$ as in the proof of Theorem \ref{i2}. Since
$A$ spans $\mathbb {P}^m$, if $e_i\le m-1$, then $e_{i+1}=0$ and $W_i=\emptyset$. Hence $g\le d$. If $2\le g<d$ we also get $e_i \ge d+m+2-g$ for $i<g$
and hence $d+2m \ge g(d+m+2-g) -m+1$ and hence $g\le 2$. If $g=d$, then $e_d\ge 3$, $e_i\ge m$ if $i<d$ and hence $\deg (W_0)\ge m(d-1)+3$, a contradiction.

\quad (a) Assume $g=2$. Set $k:= \dim (\langle W_1\rangle )$. Remark \ref{w3.0} for the integer $d-1$
gives $e_2\ge k+d$. Since $A$ spans $\mathbb {P}^4$ we have $e_1\ge e_2+m-k \ge m+d$. Hence $2d+m +k\le d+2m$, contradicting the inequality $d\ge m$.

\quad (b) Assume $g=1$. Since $A$ spans $\mathbb {P}^m$, Remark \ref{w3.0} gives $e_1\ge d+m+1$.

\quad (b1) Assume $h^1(\mathcal {I}_{W_1}(d-1))=0$. We get $A_1\subset H_1$ and $S\setminus S\cap H_1 = B\setminus B\cap H_1$ (\cite[Lemma 5.1]{bb2}). Since
$A$ is in linearly general position and spans $\mathbb {P}^m$, we get that $S\setminus S\cap H_1$ is a point; call it $Q$.
Since $h^1(\mathcal {I}_{W_1}(d-1))=0$, Grassmann's formula gives that $\langle \nu _d(A)\rangle \cap \langle \nu _d(B)\rangle$ is the linear span of $\nu _d(Q)$ and $\langle 
\nu _d(A\setminus \{Q\}) \rangle \cap \langle 
\nu _d(B\setminus \{Q\}) \rangle$. Fix $P_1\in \langle 
\nu _d(A\setminus \{Q\}) \rangle \cap \langle 
\nu _d(B\setminus \{Q\}) \rangle$ such that $P\in \langle \{\nu _d(Q),P_1\}\rangle$. Since $P\notin \langle A'\rangle$ for any $A'\subsetneq A$, we get $P_1\notin
\langle \nu _d(A\setminus \{Q\})\rangle$. 
In the same way we get $P_1\notin \langle \nu_d(B')\rangle$ for any $B'\subsetneq B\setminus \{Q\}$. We may use the inductive assumption in $m$ and get
that $B$ is as claimed in the statement of Proposition \ref{w3}.

\quad (b2) Assume $h^1(\mathcal {I}_{W_1}(d-1)) >0$ and set $k:= \dim (\langle W_1\rangle )$. Remark \ref{w3.0} gives $\deg (W_1) \ge d+k$ and $e_1\ge d+m$.
and hence $d+k \le m$, a contradiction. 
\end{proof}

\begin{proposition}\label{w5.0}
Assume $d\ge 7$. Fix $O_3\in \mathbb {P}^m$, $m\ge 4$ and let $A_1, A_2$ be connected degree 2 subschemes of $\mathbb {P}^m$ such that
$A:= A_1\cup A_2\cup \{O_3\}$ spans a 4-dimensional linear subspace. Set $\{O_i\}:= (A_i)_{red}$. Fix $P\in \langle \nu _d(A)\rangle$ such that $P\notin \langle \nu _d(A')\rangle$
for any $A'\subsetneq A$. Then $r_{m,d}( P) =2d+1$. There are uniquely determined $P_i\in \langle \nu _d(A_i)\rangle$, $i=1,2$, such that $A_i$ evinces the cactus rank of $P_i$
and every $B\in \mathcal {S}( P)$ has a decomposition $B = B_1\sqcup B_2\sqcup \{O_3\}$ with $B_i\subset \langle A_i\rangle \setminus \{O_i\}$
$\sharp (B_i) =d$ and $B_i\in \mathcal {S}( P_i)$.
\end{proposition}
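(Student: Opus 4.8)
The plan is to first establish the upper bound $r_{m,d}(P)\le 2d+1$ by a Sylvester-plus-Grassmann decomposition, and then to prove the matching lower bound together with the structural description by the residuation machinery of step (b) of the proof of Theorem~\ref{i2}. By concision (\cite[Exercise 3.2.2.2]{l}) I may assume $m=4$, so that $A$ is linearly independent and spans $\mathbb{P}^4$; in particular $L_i:=\langle A_i\rangle$ are two disjoint lines and $O_3\notin\langle L_1\cup L_2\rangle$. Since $A$ is linearly independent, the spaces $\langle\nu_d(A_1)\rangle$, $\langle\nu_d(A_2)\rangle$ and $\{\nu_d(O_3)\}$ are in direct sum inside $\langle\nu_d(A)\rangle$, so Grassmann's formula yields \emph{unique} $P_i\in\langle\nu_d(A_i)\rangle$, $i=1,2$, and a scalar $c$ with $P\in\langle\{P_1,P_2,\nu_d(O_3)\}\rangle$. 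The minimality hypothesis $P\notin\langle\nu_d(A')\rangle$ for $A'\subsetneq A$ forces $P_i\ne\nu_d(O_i)$ (so that $A_i$ evinces the cactus rank of $P_i$) and $c\ne 0$. As $\nu_d(L_i)$ is a degree $d$ rational normal curve and $P_i$ lies on the tangent line $\langle\nu_d(A_i)\rangle$ but off the curve, Sylvester's theorem (\cite{cs}, \cite[Theorem 4.1]{lt}) gives $r_{m,d}(P_i)=d$, realized by $d$ points of $L_i\setminus\{O_i\}$; adding $\nu_d(O_3)$ gives $r_{m,d}(P)\le 2d+1$.

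For the lower bound I would fix $B\in\mathcal{S}(P)$, so $\sharp(B)\le 2d+1$, and set $W_0:=A\cup B$; since $A$ is not reduced we have $A\ne B$, hence $h^1(\mathcal{I}_{W_0}(d))>0$ by \cite[Lemma 1]{bb1}, and $\deg(W_0)\le 2d+6$. I then apply Lemma~\ref{w1}. The plane cubic alternative is impossible because $3d>2d+6$ precisely when $d\ge 7$, which is our hypothesis. In the conic alternative there is a plane $\langle T\rangle$ with $\deg(T\cap W_0)\ge 2d+2$; choosing a hyperplane $N\supseteq\langle T\rangle$ I get $\deg(\mbox{Res}_N(W_0))\le 4$, whence $h^1(\mathcal{I}_{\mbox{Res}_N(W_0)}(d-1))=0$, and since $A\nsubseteq N$ this contradicts \cite[Lemma 5.1]{bb2}. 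Thus Lemma~\ref{w1} leaves only a line $L$ with $\deg(L\cap W_0)\ge d+1$; as $A$ is in linearly general position, $\deg(L\cap A)\le 2$, and Remark~\ref{uno.1} gives $d-1\le\sharp(B\cap L)\le d$.

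Next I iterate. Let $N_1\supseteq L$ be a hyperplane with $\deg(N_1\cap W_0)$ maximal; since $A$ spans $\mathbb{P}^4$, \cite[Lemma 5.1]{bb2} gives $h^1(\mathcal{I}_{\mbox{Res}_{N_1}(W_0)}(d-1))>0$, and as $\deg(\mbox{Res}_{N_1}(W_0))\le d+5$ the conic and cubic cases of Lemma~\ref{w1} are again excluded by degree. Hence there is a second line $R$ with $\deg(R\cap\mbox{Res}_{N_1}(W_0))\ge d+1$; since this residual is supported off $N_1\supseteq L$ we have $R\ne L$ and $d-1\le\sharp(B\cap R)\le d$. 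Now I run the quadric argument of step (b2.2.2.2): for a general $Q\in|\mathcal{I}_{L\cup R\cup\{O_3\}}(2)|$ the scheme $\mbox{Res}_Q(W_0)$ has degree $\le 3$, so $h^1(\mathcal{I}_{\mbox{Res}_Q(W_0)}(d-2))=0$ and \cite[Lemma 5.1]{bb2} forces $W_0\subset Q$; letting $Q$ vary and using that $A$ is curvilinear, I conclude $A\cup B\subset L\cup R\cup\{O_3\}$. Because $A_i$ is connected of degree $2$ and spans $L_i$, its containment in $L\cup R\cup\{O_3\}$ forces $L_i\in\{L,R\}$; as $L_1\ne L_2$ this gives $\{L,R\}=\{L_1,L_2\}$ (in particular $L,R$ are skew), and $O_3\notin L_1\cup L_2$ then forces $O_3\in B$. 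Since by Remark~\ref{uno.1} each $L_i$ carries at most $d$ points of $B$, while realizing $P_i$ needs at least $d$, I get $\sharp(B\cap L_i)=d$ and $r_{m,d}(P)=2d+1$.

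Finally, writing $B=B_1\sqcup B_2\sqcup\{O_3\}$ with $B_i=B\cap L_i$, the direct sum decomposition of the spans (together with $P\notin\langle\nu_d(A')\rangle$ for $A'\subsetneq A$) shows that $B_i$ lies in $\langle\nu_d(B_i)\rangle\cap\langle\nu_d(A_i)\rangle$ and evinces exactly the point $P_i$ produced in the first paragraph, so $B_i\in\mathcal{S}(P_i)$ with $\sharp(B_i)=d$, as claimed; the uniqueness of $P_1,P_2$ is the uniqueness from Grassmann's formula. The \textbf{main obstacle} is the middle step, namely forcing $A\cup B$ onto the two skew lines $L_1,L_2$ together with $O_3$: this requires the iterated residuation and the base-locus (quadric) argument to be carried out carefully, both to rule out a ``diagonal'' line through $O_3$ that might a priori absorb $d+1$ points of $W_0$ and to keep the degree bookkeeping tight enough for the repeated applications of \cite[Lemma 5.1]{bb2}. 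It is the analogue here of the delicate sub-substeps (b2.2.2.2) and (b3.3) of the proof of Theorem~\ref{i2}, and the hypothesis $d\ge 7$ is used exactly to discard the cubic alternative of Lemma~\ref{w1}.
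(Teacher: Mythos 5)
Your upper bound and the final bookkeeping are fine, but there is a genuine gap at the centre of the lower-bound argument: the claim that ``since $A$ spans $\mathbb {P}^4$, \cite[Lemma 5.1]{bb2} gives $h^1(\mathcal {I}_{\mbox{Res}_{N_1}(W_0)}(d-1))>0$'' is false. That lemma, in the form used throughout this paper, only yields a contradiction from $h^1(\mathcal {I}_{\mbox{Res}_{N}(W_0)}(d-1))=0$ when \emph{no connected component of $A$ is reduced}; here $A$ has the reduced component $\{O_3\}$, and the correct conclusion of the lemma when $h^1=0$ is merely that $A_1\cup A_2\cup (B\setminus \{O_3\})\subset N_1$ and $O_3\in B\setminus N_1$ --- no contradiction at all. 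Worse, this case is not a degenerate one to be excluded: for the very sets $B=B_1\sqcup B_2\sqcup \{O_3\}$ that the proposition asserts to exist, the maximal hyperplane through $L$ is $N_1=\langle A_1\cup A_2\rangle$, so $\mbox{Res}_{N_1}(W_0)=\{O_3\}$ and $h^1=0$. Your argument, as written, would therefore ``prove'' that no $B\in \mathcal {S}( P)$ exists. The paper treats $h^1(\mathcal {I}_{W_1}(d-1))=0$ as the main case (step (b2) of its proof): it passes to the $3$-dimensional space $H_1$ containing $A_1\cup A_2\cup (B\setminus \{O_3\})$, reruns the whole residuation machinery there with planes as hyperplanes, and only after a further substantial case analysis pins $B\setminus \{O_3\}$ onto $\langle A_1\rangle \cup \langle A_2\rangle$. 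That reduction and the work inside $\mathbb {P}^3$ are entirely missing from your proposal; the paper even flags this exact pitfall in step (d) of the proof of Theorem \ref{i2}, where it notes that with a reduced connected component present ``we cannot freely use \cite[Lemma 5.1]{bb2}''.

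The same misreading occurs twice more, in milder form. In your dismissal of the conic alternative of Lemma \ref{w1}, $h^1(\mathcal {I}_{\mbox{Res}_N(W_0)}(d-1))=0$ together with $A\nsubseteq N$ is not by itself a contradiction; you must let $N$ vary over all hyperplanes containing $\langle T\rangle$ to force $A_1\cup A_2\subset \langle T\rangle$, which then contradicts the linear independence of $A$ --- that one is fixable. Similarly, Lemma \ref{w1} applied to $\mbox{Res}_{N_1}(W_0)$ in degree $d-1$ only produces a line $R$ with $\deg (R\cap \mbox{Res}_{N_1}(W_0))\ge d$, not $d+1$, so the subsequent degree counts should be redone with the correct bound. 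None of these repairs, however, fills the main hole above.
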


\begin{proof}
By concision (\cite[Exercise 3.2.2.2]{bb2}) we may assume $m=4$. Since $\nu _d(A)\rangle$ is linearly independent, there are unique $P_i \in \langle \nu _d(A_i)\rangle$, $i=1,2$,
such that $P\in \langle \{P_1,P_2,\nu _d(O_3)\}\rangle$. Since $P\notin \langle \nu _d(A')\rangle$ for any $A'\subsetneq A$, we have $P_i\ne \nu _d(A_i)$.
Hence $P_i$ has border rank 2 with respect to the degree $d$ rational normal curve $\nu _d(\langle A_i\rangle$. Since $P\in \langle \nu _d(B_1\cup B_2\cup \{O_3\})\rangle$
for all $B_i\in \mathcal {S}( P_i)$, we get $r_{m,d}( P) \le 2d+1$. Fix any $B\in \mathcal {S}( P)$ and set $W_0:= A\cup B$. We have $\deg (W_0) \le 2d+6$.
Define $H_i,e_i,W_i,g$ as in the proof of Theorem \ref{i2}. Since $\deg (W_0) \le 2d+6$, we get $g\le 2$.

\quad (a) Assume $g=2$. Since $e_1\ge e_2$ and $e_1+e_2\le 2d+6$, there is a line $L\subset H_2$ such that $\deg (L\cap W_1)\ge d+1$. Hence $e_1\le d+5$. Let $N_1\subset \mathbb {P}^4$
be a hyperplane containing $L$ and with maximal $f_1:= \deg (N_1\cap W_0)$. Define $N_i,f_i,Z_i,g'$ as in step (b1) of the proof of Theorem \ref{i2}. We get $g'\le 2$.

\quad (a1) Assume $g'=2$. Since $A$ spans $\mathbb {P}^4$ we have $f_1\ge \deg (L\cap W_1)+2\ge d+3$ and hence $f_2\le d+3$. Therefore
there is a line $R\subset N_2$ such that $\deg (Z_1\cap R)\ge d+1$. Since $\deg (R\cap A)\le 2$ and $Z_1\cap B \subseteq B\setminus B\cap L$, we get
$L\ne R$. Any hyperplane $M$ containing $L\cup R$ shows that $e_1\ge (d+1)+(d+1)-1$, a contradiction.

\quad (a2) Assume $g'=1$.

\quad (a2.1) Assume $\deg (L\cap W_0) \ge d+2$. The case $t=1$ of Remark \ref{uno.1} gives $\deg (L\cap A)=2$, $\sharp (B\cap L) =d$
and $L\cap A\cap B =\emptyset$. Since $A$ spans $\mathbb {P}^4$, we get $f_1\ge d+4$.

\quad (a2.1.1) Assume $h^1(\mathcal {I}_{Z_1}(d-1)) >0$. Since $\deg (Z_1) \le 2d+6-f_1\le 2(d-1)+1$, there is a line $R\subset \mathbb {P}^4$
such that $\deg (Z_1\cap R)\ge d+1$ and hence with $R\setminus R\cap L\ne \emptyset$ Hence $R\ne L$. Any hyperplane
$M$ containing $L\cup R$ has $\deg (M\cap W_0)\ge d+2+d+1-1$, contradicting the inequality $e_1\le d+5$.

\quad (a2.1.2) Assume $h^1(\mathcal {I}_{Z_1}(d-1)) =0$. Since $A\nsubseteq N_1$, \cite[Lemma 5.1]{bb2} gives $O_3\in B$ and $A_1\cup A_2\cup (B\setminus \{O_3\}
\subset N_1$. Since $e_1\le d+5$, we get $\sharp (B) \le d+2$. Since $\deg (W_0)\le d+7$, we immediately get $g=1$, a contradiction.

\quad (a2.2) Assume $\deg (L\cap W_0)=d+1$. Since $e_1\le d+5\le 2d+1$, there is a line $R\subset N_1$ such that $\deg (N_1\cap R)\ge d+2$
(\cite[Lemma 34]{bgi}). Since $R\ne L$,
we get $d+5\ge e_1\ge \deg (N\cap (L\cup R) \ge d+1+d+2-1$, a contradiction.

\quad (b) Assume $g=1$.

\quad (b1) Assume $h^1(\mathcal {I}_{W_1}(d-1)) >0$. We get $\deg (W_1) \ge d+1$ (\cite[Lemma 34]{bgi}) and hence $e_1\le d+5$. Since $h^1(H_1,\mathcal {I}_{H_1\cap W_0}(d))
>0$ and $e_1\le 2d+1$, there is a line $L\subset H_1$ such that $\deg (L\cap W_0)\ge d+2$. In particular $e_1\ge d+2$ and hence  $\deg (W_1) \le d+4$.
Since $2(d-1)+1 \le d+4$, we get the existence of a line $R\subset \mathbb {P}^4$ such that $\deg (W_1\cap R) \ge d+1$. Since $\deg (A\cap R)\le 2$, we
obtain $R\cap (B\setminus B\cap L)\ne \emptyset$ and hence $R\ne L$. Any hyperplane containing $L\cup R$ gives $e_1\ge (d+2)+(d+1)-1>d+5$, a contradiction.

\quad (b2) Assume $h^1(\mathcal {I}_{W_1}(d-1)) =0$. Since $A\nsubseteq H_1$, we get $O_3\in B$ and $A_1\cup A_2\cup (B\setminus \{O_3\})
\subset N_1$ (\cite[Lemma 5.1]{bb2}). Since $\nu _d(A)$ is linearly independent, there is a unique $Q\in \langle \nu _d(A_1\cup A_2)\rangle$ such that $P\in \langle \{Q\cup \nu_d(O_3)\}\rangle$ (we have $\{Q\} = \langle \nu _d(A_1\cup A_2)\rangle \cap \langle \{P,\nu _d(O_3)\}\rangle$). 
We get that $A_1\cup A_2$ evinces the cactus rank and the border rank of $Q$ and that $Q\in \langle \nu _d(B\setminus \{O_3)\rangle$.
Since $B\in \mathcal {S}( P)$, we get $B\setminus \{O\} \in \mathcal {S}(Q)$. Now we repeat the construction in $H_1$ with $A':= A_1\cup A_2$ and $B':= B\setminus \{O_3\}$.
From now on we work inside the 3-dimensional projective space $H_1$ and we use planes contained in $H_1$ as hyperplane. We use the notations $W_i, H_i,e_i,g,Z_i,f_i,g_i$
in this new set-up. For instance $W_0 =(A_1\cup A_2)\cup (B\setminus \{O\})$ and with respect to the point $Q$ instead of the point $P$. We need to check that
$B\setminus \{O_3\}\in \mathcal {S}( Q)$ and describe $B\setminus \{O_3\}$. For the first part it is sufficient to apply \cite[Proposition 5.17]{bb2} (which gives $r_{m,d}( Q) \le 2d$), because we have $\sharp
(B\setminus \{O_3\}) \le 2d$. In particular we checked that $r_{m,d}( P) =2d+1$, i.e. the part of Proposition \ref{w5.0} used in Theorem \ref{i2}.
The only difference with respect to the proof of Theorem \ref{i2} is that (since we are working with planes) we only now that if $e_i\le 2$, then $e_{i+1} =0$ and $W_i=\emptyset$
and that if $f_i\le 2$, then $f_{i+1} =0$ and $Z_i=0$. We have $w_0:= \deg (A_1\cup A_2\cup (B\setminus \{O_3\}))\le 2d+4$. First assume $g \ge d+2$ (resp. $g'\ge d+1$).
We get $w_0 \ge 1+3(d+1)$, a contradiction. Now assume $g=d+1$ (resp. $g'=d+1$). We find $w_0>3d$, a contradiction. Now
assume $g<d$ and $e_g \ge 2(d+1-g)+2$ (resp. $g'<d$ and $f_{g'} \ge 2(d+1-g)+2$). We get $w_0\ge 2g(d+1-g) +4$ (resp. $w_0\ge 2g'(d+1-g')+4$) and hence $g=1$ (resp. $g'=1$). In particular if $H_1$ is a plane of $\mathbb {P}^3$ with $\deg (H_1\cap W_0)$ maximal, then $h^1(H_1,\mathcal {I}_{H_1\cap W_0}(d)) >0$ and hence (since
$H_1\cap W_0$ spans $H_1$) $e_1\ge d+3$ (Remark \ref{w3.0}).
Since $A_1\cup A_2$ spans $\mathbb {P}^3$ and no connected component of $A_1\cup A_2$ is reduced, \cite[Lemma 5.1]{bb2} gives $h^1(\mathcal {I}_{W_1}(d-1)) >0$
and in particular $\deg (W_1) \ge d+1$. Hence $e_1\le d+3$, which implies $\deg (W_1) = d+1$ and the existence
of a line $R$ such that $W_1\subset R$. Since $e_1\le d+3$, there is a line $L\subset H_1$ such that $\deg (L\cap W_0) \ge d+2$. Remark \ref{uno.1} gives that
either $L =\langle A_1\rangle$ or that $L =\langle A_2\rangle$, $\sharp (B\cap L) =d$ and $B\cap (A_1\cup A_2)\cap L) =\emptyset$. Up to renaming $A_1$ and $A_2$
we may assume $L =\langle A_1\rangle$. Remark \ref{uno.1} gives $O_1\notin B$ and $\sharp (B\cap L) =d$,
i.e. $B\cap \langle A_1\rangle$ is as described in the statement of Proposition \ref{w5.0}. Remark \ref{uno.1} gives $O_2\in R$ and that either $O_2\notin H_1$, $R =\langle A_2\rangle$ and $\sharp (B\cap W_1)=d-1$
or $O_2\in H_1\cap R$, $R =\langle A_2\rangle$ and $\sharp (B\cap (R\setminus \{O_2,O_3\}))= d+1$. In the latter case we get $O_2\notin B$, because
we know that $\sharp (B)\le 2d+1$. Hence in the latter case $B \setminus \{O_3\} \subset \langle A_1\rangle \cup \langle A_2\rangle$
and each $B\cap \langle A_i\rangle$ is as described in Proposition \ref{w5.0}. Now assume $O_2\notin H_1$. Recall that $R =\langle A_2\rangle$ and $L=\langle A_1\rangle$.
Since $e_1=d+3$, there is
a unique $o\in (B\setminus \{O_3\})\cap H_1$ such that $H_1 = \langle L\cup \{o\}\rangle$. Fix any $Q\in (B\setminus B\cap H_1)$ and set $M:= \langle \{Q\}\cup L\rangle$.
Since $e_1=d+3$, we have $o\notin M$. Since $A_1\cup A_2$ has no reduced connected component, \cite[Lemma 5.1]{bb2} gives $h^1(\mathcal {I}_{\mbox{Res}_M(W_0)}(d-1))>0$.
We first get the existence of a line $R'\subset \mathbb {P}^3$ such that $\deg (R' \cap  \mbox{Res}_M(W_0))\ge d+1$
and then $\deg (\mbox{Res}_M(W_0))=d+1$, $\mbox{Res}_M(W_0)\subset R'$ and $R' =R$. We get $o\in \langle A_2\rangle$. Hence $B\cap \langle A_2\rangle$
is as described in the statement of Proposition \ref{w5.0} and $B\setminus \{O_3\} \subset \langle A_2\rangle \cup \langle A_1\rangle$.
\end{proof}

For a description of $\mathcal {S}(P_1)$ and $\mathcal {S}(P_2)$ below see \cite[part (b3) of Theorem 4 and part (f) of \S 4]{b} and\cite[Proposition 1 and Theorem 3]{b}, respectively.
The case in which the conic $C\supset A_1\cup B_1$ is reducible is described in \cite[Proposition 7]{b}.

\begin{proposition}\label{w4}
Fix an integer $d\ge 9$. Let $A_1\subset \mathbb {P}^4$ be a degree $3$ connected curvilinear scheme spanning a plane and $A_2\subset \mathbb {P}^4$ be a degree two connected
curvilinear scheme. Assume $\langle A_1\rangle \cap \langle A_2\rangle =\emptyset$, i.e. assume $A_1\cap A_2=\emptyset$ and $\dim (\langle A_1\cup A_2\rangle )=4$.
Then $r_{m,d}( P) = 3d-1$. Fix any $B\in \mathcal {S}( P)$. Then there
is a unique decomposition  $B = B_1\sqcup B_2$ with $B\cap A_{red }=\emptyset$, $\sharp (B_1) = 2d-1$, $\sharp (B_2)=d$,
$B_1\subset \langle A_1\rangle $, $B_2\subset \langle A_2\rangle$, each $B_i$ evincing the symmetric tensor
rank of a uniquely determined point of $P_i\in \langle \nu _d(A_i)\rangle$.The scheme $A_1\cup B_1$ is contained in a reduced conic $C\subset \langle A_1\rangle$.

 Conversely, for any $P_i\in \langle \nu _d(A_i)\rangle$, $i=1,2$,
such that $P_i\notin \langle \nu _d(A')\rangle$ for any $A'\subsetneq A_i$, any $B_i\in \mathcal {S}(P_i)$ and any $P\in \langle \{P_1,P_2\}\rangle \setminus \{P_1,P_2\}$
we have $B_1\cap B_2=\emptyset$ and $B_1\cup B_2\in \mathcal {S}( P)$. 
\end{proposition}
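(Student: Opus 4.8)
The plan is to follow the template established in step (b) of the proof of Theorem~\ref{i2}, specialized to the type $(2;3,2)$ configuration, and to leverage the one-dimensional Sylvester theory on the two rational normal curves $\nu_d(\langle A_1\rangle)$ and $\nu_d(\langle A_2\rangle)$ to pin down $\mathcal{S}(P)$ exactly. First I would establish the upper bound $r_{m,d}(P)\le 3d-1$ directly: by concision (\cite[Exercise 3.2.2.2]{l}) reduce to $m=4$; since $\nu_d(A)$ is linearly independent, there are unique $P_1\in\langle\nu_d(A_1)\rangle$ and $P_2\in\langle\nu_d(A_2)\rangle$ with $P\in\langle\{P_1,P_2\}\rangle$, and since $P\notin\langle\nu_d(A')\rangle$ for $A'\subsetneq A$, each $P_i$ has border rank equal to $\deg(A_i)$ with respect to the rational normal curve $\nu_d(\langle A_i\rangle)$. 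By Sylvester's theorem applied to the degree $d$ rational normal curves (here Remark~\ref{uno.1} with $t=1$ on $\langle A_2\rangle$, and the $t=2$ analysis on the plane $\langle A_1\rangle$), we have $r_{m,d}(P_1)=2d-1$ and $r_{m,d}(P_2)=d$, whence taking $B_i\in\mathcal{S}(P_i)$ gives $P\in\langle\nu_d(B_1\cup B_2)\rangle$ and $\sharp(B_1\cup B_2)\le 3d-1$.

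The substance is the reverse inequality and the structural description. Fix $B\in\mathcal{S}(P)$, assume $\sharp(B)\le 3d-2$, set $W_0:=A\cup B$, so $\deg(W_0)\le 3d+3$, and run the Terracini--residuation machinery of step (b): choose hyperplanes $H_i$ with $e_i:=\deg(H_i\cap W_{i-1})$ maximal and $W_i:=\mbox{Res}_{H_i}(W_{i-1})$, and let $g$ be the first index with $h^1(H_g,\mathcal{I}_{W_{g-1}\cap H_g,H_g}(d+1-g))>0$. Since $A\ne B$ (as $A$ is not reduced) we have $h^1(\mathcal{I}_{W_0}(d))>0$, and the degree bound $\deg(W_0)\le 3d+3$ together with the function $\psi(t)=t(d+5-t)-2$ argument forces $g$ into a small range, realistically $1\le g\le 2$. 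Then, exactly as in the substeps of step (b) and of Proposition~\ref{w3}, apply Lemma~\ref{w1} and Remark~\ref{w2} to extract a line $L$ with $\deg(L\cap W_1)\ge d+1$ (or a conic), use Remark~\ref{uno.1} to force $L$ through the singular support of a component of $A$ and to read off $\sharp(B\cap L)$, and iterate residuation (the auxiliary hyperplanes $N_i$, schemes $Z_i$, integers $f_i$, $g'$). The goal of each substep is either to derive a contradiction with $\sharp(B)\le 3d-2$ or to conclude that $B$ splits as $B_1\sqcup B_2$ with $B_1\subset\langle A_1\rangle$, $B_2\subset\langle A_2\rangle$.

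Once the containments $B_1\subset\langle A_1\rangle$, $B_2\subset\langle A_2\rangle$ are forced, I would invoke \cite[Lemma 5.1]{bb2} (Grassmann's formula for the linear spans) to show that $\langle\nu_d(A)\rangle\cap\langle\nu_d(B)\rangle$ decomposes compatibly, so that $P_i\in\langle\nu_d(B_i)\rangle$, hence $B_i\in\mathcal{S}(P_i)$, forcing $\sharp(B_1)=2d-1$ and $\sharp(B_2)=d$ and $B\cap A_{red}=\emptyset$; the statement that $A_1\cup B_1$ lies on a reduced conic $C\subset\langle A_1\rangle$ then follows from the one-dimensional Sylvester structure of rank decompositions of the point $P_1$ of border rank $3$ on the plane rational normal curve $\nu_d(\langle A_1\rangle)$ (cf.\ Remark~\ref{uno.1} with $t=2$, giving rank $\le 3d-1$ there and the reduced-conic description). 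For the converse, given any $P_i\notin\langle\nu_d(A')\rangle$ and $B_i\in\mathcal{S}(P_i)$ with $P\in\langle\{P_1,P_2\}\rangle\setminus\{P_1,P_2\}$, the disjointness $B_1\cap B_2=\emptyset$ is immediate from $\langle A_1\rangle\cap\langle A_2\rangle=\emptyset$, and $B_1\cup B_2\in\mathcal{S}(P)$ follows because $P\in\langle\nu_d(B_1\cup B_2)\rangle$ realizes the already-established minimal rank $3d-1$. The main obstacle is the exhaustive case analysis in the middle paragraph: ensuring that every configuration of the auxiliary lines and conics produced by Lemma~\ref{w1} either contradicts the rank bound or collapses into the clean split $B_1\sqcup B_2$, with careful bookkeeping of whether lines are tangent to the ambient rational normal curve and whether they pass through $O_1$ or $O_2$.
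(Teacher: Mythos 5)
Your overall strategy is the same as the paper's (residuation by hyperplanes of maximal intersection, Lemma~\ref{w1} and Remark~\ref{uno.1} to extract lines and conics, \cite[Lemma 5.1]{bb2} applied using the fact that no connected component of $A$ is reduced, and a Grassmann decomposition at the end), but there are two concrete defects. First, the logical framing is wrong for what the proposition claims. If you assume $\sharp (B)\le 3d-2$ at the outset and argue by contradiction, you can at best prove the rank equality $r_{m,d}( P)=3d-1$; you cannot then describe an arbitrary $B\in \mathcal {S}( P)$, which necessarily has $\sharp (B)=3d-1$ and hence $\deg (W_0)\le 3d+4$, not $3d+3$. The paper runs the machinery once, with $\deg (W_0)\le 3d+4$ and no a priori bound on $\sharp (B)$ beyond $3d-1$, and in each branch either reaches a contradiction or derives the splitting $B=B_1\sqcup B_2$ with $B_i\in \mathcal {S}(P_i)$; both conclusions of the proposition fall out of the same case analysis. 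Second, and as a consequence of the correct degree bound, your estimate ``realistically $1\le g\le 2$'' fails: since $\psi (3)=3d+4$, the case $g=3$ is not excluded and must be treated (it occupies step (a) of the paper's proof, with three sub-cases built around the forced values $e_1=e_2=d+2$, $e_3=d$ and a second residuation sequence).

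Beyond that, the part you defer as ``the main obstacle'' is genuinely where the proof lives: the paper's step (b1.1.2) produces three distinct lines $D,L,R$, passes to a general quadric in $|\mathcal {I}_{D\cup L\cup R}(2)|$, shows $W_0$ lies in the scheme-theoretic base locus, and then uses the curvilinearity of $A_1$ (not contained in a line) to force the configuration ``reducible conic $J_1\cup J_2$ through $O_1$ plus a line $J_3$ skew to its plane,'' after which the uniqueness of the points $U_i=Q_i=Q'_i$ in the Grassmann decomposition identifies $B_i$ with an element of $\mathcal {S}(Q_i)$. Your sketch does not supply any of this, nor the parallel quadric arguments in steps (b1.2), (c1), (c2). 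The converse direction and the upper bound $r_{m,d}( P)\le 3d-1$ in your first paragraph do match the paper (including the citation of \cite{b} for $\mathcal {S}(P_1)$ and the reduced-conic description of $A_1\cup B_1$), so the proposal is a correct road map for the easy half but leaves the substantive half unproved and, as framed, structurally unable to yield the classification of $\mathcal {S}( P)$.
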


\begin{proof}
Since $P\in \langle \nu _d(A)\rangle$ and $\langle \nu _d(A_1)\rangle \cap \langle \nu _d(A_2)\rangle =\emptyset$, there are uniquely determined
$P_i\in \langle \nu _d(A_i)\rangle$, $i=1,2$, such that $P\in \langle \{P_1,P_2\}\rangle$. Since $P\notin \langle \nu _d(A')\rangle$ for any $A'\subsetneq A$, then $P\ne P_i$
and
$P_i\notin \langle \nu _d(A')\rangle$ for any $A'\subsetneq A$. Every element of $\mathcal {S}(P_i)$ is contained in $\langle A_i\rangle$ (\cite[Exercise 3.2.2.2]{l})
and each element of $\mathcal {S}( P_i)$ is known (\cite[Proposition 1 and Theorem 2]{b} for $i=1$, \cite[Theorem 3 and \S 4]{b} for $i=2$). Since $\langle A_1\rangle \cap \langle A_2\rangle =\emptyset$, we
have $B_1\cap B_2=\emptyset$ for all $B_i\in \mathcal {S}( P_i)$.

Set $\{O_1\} =(A_1)_{red}$ and $\{O_2\} = (A_2)_{red}$. Fix $B\in \mathcal {S}( P)$. Since $r_{m,d}(P_1) =2d-1$ and $r_{m,d}(P_2) =d$,
we have $r_{m,d}( P) \le 3d-1$. Hence to prove all the assertions of the proposition it is sufficient to prove that $B = B_1\sqcup B_2$ with $B_i\in \mathcal {S}( P_i)$ for all $i$. Set $W_0:= A\cup B$.
We have $w_0:= \deg (W_0) \le 3d+4$. Define $e_i, H_i, W_i,g$ as in step (b) of the Theorem \ref{i2}. Since $w_0\le 3d+4$, we get $g\le 3$.

\quad (a) Assume $g=3$. Since $e_1\ge e_2\ge e_3$ and $e_1+e_2+e_3 \le 3d+4$, we have $e_3\le d+1 \le 2(d-2)+1$. Since $h^1(H_3,\mathcal {I}_{W_2\cap H_3}(d-2))>0$,
there is a line $L\subset H_3$ such that $\deg (L\cap W_2)\ge d$. Since $A$ spans $\mathbb {P}^4$, the maximality property
of the integer $e_2$ gives $e_2 \ge 2 +\deg (L\cap W_1) \ge d+2$. Since $e_1\ge e_2$, we get $e_1=e_2=d+2$ and $e_3 = d$. Set $Z_0:= W_0$
and define $Z_i, N_i, f_i, g'$ as in step (b1) of the proof of Theorem \ref{i2}. As above we get $g'\le 3$ and $f_1=d+2$.

\quad (a1) Assume $g'=3$. As above we get $f_1=f_2=d+2$, $f_3=d$ and the existence of a line $R\subset N_3$ such that $Z_3\subset R$. Since $\deg (R\cap A)\le 2$, we have $R\cap (B\setminus B\cap N_1)
\ne \emptyset$ and hence $R\ne L$. Hence $\deg (W_0\cap (L\cup R)) \ge d+d-1>d+2 =e_1$, contradicting the existence of a hyperplane containing $L\cup R$. 

\quad (a2) Assume $g'=2$. Since  $f_2\le 3d+4-f_1$, Lemma \ref{w1} gives that either there is a line $D\subset N_2$ with $\deg (D\cap Z_1)\ge d+1$ or there
is a conic $F\subset N_2$ with $\deg (F\cap W_1)\ge 2d$. The latter case is impossible, because $e_1<2d$. Hence $D$ exists. Since $\deg (D\cap A)\le 2$, we have $D\cap (B\setminus B\cap N_1)
\ne \emptyset$ and hence $D\ne L$. Any hyperplane $M\supset D\cup L$ gives $e_1\ge (d+1)+d-1>d+2$, a contradiction.

\quad (a3) Assume $g'=1$. Since $f_1\le e_1=d+2$, \cite[Lemma 34]{bgi} gives the existence of a line $T\subset N_1$ such that $\deg (Z_0\cap T)\ge d+2$.
Since $A$ spans $\mathbb {P}^4$, we get $e_1 \ge 2 +\deg (Z_0\cap T)$, a contradiction.

\quad (b) Assume $g=2$. Since $e_1\ge e_2$ and $e_1+e_2\le 3d+4$, we have $e_2 \le 2(d-1)+1$. Hence \cite[Lemma 34]{bgi} gives the existence of a line $L\subset N_2$
such that $\deg (L\cap W_1)\ge d+1$. We define $Z_i,f_i,N_i,g'$ with respect to the line $L$ met in this step. We have
$f_1\ge \deg (L\cap W_1)+2 \ge d+3$. Hence $\sum _{i\ge 2} f_i \le 2d+1$, excluding the case $g'=3$. Therefore $1\le g'\le 2$.

\quad (b1) Assume $g'=2$. Since $f_2\le 2d+1$, either there is a line $R\subset N_2$ with $\deg (R\cap Z_1)\ge d+1$ or there is a reduced conic $F\subset N_2$ with $\deg (F\cap W_1)\ge 2d$ (Lemma \ref{w1} and Remark \ref{w2}).

\quad (b1.1) Assume the existence of the line $R\subset N_2$ with $\deg (R\cap Z_1)\ge d+1$. Since $L\subset N_1$ and $R\cap (B\setminus B\cap N_1)\ne \emptyset$, we have $R\ne L$. Let $M$ be a hyperplane containing $R\cup L$ and
such $h_1:= \deg (M\cap W_0)$ is maximal. If $R\cap L = \emptyset$, then $h_1\ge 2d+2$. If $R\cap L\ne \emptyset$, then $h_1\ge 1 +\deg (W_0\cap (L\cup R))
\ge 2d+2$.

\quad (b1.1.1) Assume $h^1(\mathcal {I}_{\mbox{Res}_M(W_0)}(d-1))=0$. Since no connected component of $A$ is reduced, \cite[Lemma 5.1]{bb2} gives
$A\cup B\subset M$, a contradiction.

\quad (b1.1.2) Assume $h^1(\mathcal {I}_{\mbox{Res}_M(W_0)}(d-1))>0$. Since $h_1\ge 2d+2$, we have $\deg (\mbox{Res}_M(W_0))\le d+2\le 2(d-1)+1$.
Hence there is a line $D\subset \mathbb {P}^4$ such that $\deg (D\cap \mbox{Res}_M(W_0)) \ge d+1$. Since $R\cup L\subset M$, the lines $D,L,R$ are different.
We have $h^0(\mathcal {I}_{D\cup L\cup R}(2)) \ge 6$. Fix a general $Q\in |\mathcal {I}_{D\cup L\cup R}(2)|$. We have
$\deg (Q\cap W_0)\ge \deg (W_0\cap R)+\deg (W_0\cap L)-1+\deg (W_0\cap D)-2$ and hence $h^1(\mathcal {I}_{\mbox{Res}_Q(W_0)}(d-2)) =0$.
Since no connected component of $A$ is reduced, \cite[Lemma 5.1]{bb2} applied to the degree two hypersurface $Q$ instead of a hyperplane
gives $W_0\subset Q$. Hence $W_0$ is contained in the scheme-theoretic base locus $\mathcal {B}$ of $|\mathcal {I}_{D\cup L\cup R}(2)|$. The scheme $\mathcal {B}$
depends from the mutual position of the 3 lines, $D, L, R$, but just knowing that $A\subseteq \mathcal {B}$ gives that $\mathcal {B}$ spans $\mathbb {P}^4$.
Hence either the 3 lines $L, D, R$ are disjoint and $\langle L\cup D\cup R\rangle =\mathbb {P}^4$ or two of these lines, say $J_1,J_2$ meets, while the
other one, $J_3$, is disjoint from the plane $\langle J_1\cup J_2\rangle$. The former case cannot occur, because $A_1$ is contained in no line.
Assume that $J_1\cup J_2$ is a conic and that $J_3\cap \langle J_1\cup J_2\rangle =\emptyset$. We have $\mathcal {B} = J_1\cup J_2\cup J_3$.
Since $A_1$ is not contained in a line, we get $A_1\subset J_1\cup J_2$ (and hence $A_2\subset J_3$) and $\{O_1\} =J_1\cap J_2$. Set $B_1:= B\cap (J_1\cup J_2)$
and $B_2:= B\cap J_3$.
Since $h^1(\mathcal {I}_{\langle J_1\cup J_2\rangle \cup J_3}(d)) =0$ and $J_3\cap \langle J_1\cup J_2 \langle =\emptyset$,
the projective space $\langle \nu_d(J_1\cup J_2\cup J_3)\rangle$ is spanned by its two subspaces
$\langle \nu_d(J_1\cup L_2)\rangle$ and $\langle \nu_d(J_3)\rangle$. Hence there are uniquely determined points
$U_1\in \langle \nu_d(J_1\cup J_2)\rangle$ and $U_2\in \langle \nu_d(J_3)\rangle$ such that $P\in \langle \{U_1,U_2\}\rangle$.
For the same reason there are uniquely determined points
$Q_1\in \langle \nu_d(A_1)\rangle$, $Q_2\in \langle \nu_d(A_2)\rangle$, $Q'_1\in \langle \nu_d(B_1)\rangle$, $Q'_2\in \langle \nu_d(B_2)\rangle$ such that $P\in \langle \{Q_1,Q_2\}\rangle$ and $P\in \langle \{Q'_1,Q'_2\}\rangle$. The uniqueness of $U_i$, $i=1,2$, gives $U_i=Q_i$ and $U_i =Q'_i$. Hence $Q_i=Q'_i$, $i=1,2$. Obviously
$A_i$ evinces the scheme-rank and the border-rank of $Q_i$. Since
$B$ evinces the rank of $P$, $B_i$ evinces the rank of $Q_i$. Since $(J_1\cup J_2)\cap J_3=\emptyset$, we also get $B_1\cap B_2 =\emptyset$.
Hence $B$ is as claimed in the statement of Proposition \ref{w4}.

\quad (b1.2) Assume the existence of the conic $F$ with $\deg (F\cap W_1)\ge 2d$, but that there is no line $R$ as in (b1.1). Since $L\subset N_1$, $W_1\cap B\cap N_1=\emptyset$ and $\deg (L\cap A)\le 2$,
we get $\deg (W_0\cap (L\cup F)) \ge 3d-2$. Since $e_2\ge d+1$, we have $e_1\le 3d+4-e_2 <3d-2$. Hence $\langle L\cup F\rangle =\mathbb {P}^4$, i.e. $L\cap
\langle F\rangle =\emptyset$. Therefore $L\cup F$ is the scheme-theoretic intersection of the linear system $|\mathcal {I}_{F\cup L}(2)|$. Fix a general $Q\in |\mathcal {I}_{F\cup L}(2)|$. Since $F\cap L=\emptyset$, we have $\deg (Q\cap W_0)\ge 3d+1$ and hence $\deg (\mbox{Res}_Q(W_0))\le 3 \le d-1$.
Therefore $h^1(\mathcal {I}_{\mbox{Res}_Q(F\cup L}(d-2)) =0$. Since no connected component of $A$ is reduced, \cite[Lemma 5.1]{bb2} gives
$A\cup B\subset Q$. Since $L\cup F$ is the scheme-theoretic intersection of the linear system $|\mathcal {I}_{F\cup L}(2)|$, we get
$A\cup B\subset L\cup F$. Since $L\cap \langle F\rangle =\emptyset$ and $A_1\nsubseteq F$, we get $A_2\subset L$ and $A_1\subset F$.
We repeat the last part of the proof of step (b1.1.1) with $F$ instead of $J_1\cup J_2$ and $L$ instead of $J_3$.

\quad (b2) Assume $g'=1$. If $h^1(\mathcal {I}_{Z_1}(d-1)) =0$, then \cite[Lemma 5.1]{bb2} gives $A\subset N_1$ (because
no connected component of $A$ is reduced), a contradiction. Hence we may assume $h^1(\mathcal {I}_{Z_1}(d-1)) >0$. Since  $f_1\ge d+3$, we have $\deg (Z_1) \le 2d+1$ and hence either there is a line $R\subset \mathbb {P}^4$ such that $\deg (R\cap Z_1) \ge d+1$
or there is a reduced conic $F\subset \mathbb {P}^4$ such that $\deg (F\cap Z_1) \ge 2d$. In the second case we repeat the proof of step (b1.2.1). In the first case we
repeat all steps of (b1.2).

\quad (c) Assume $g=1$. Since $A\nsubseteq H_1$ and no connected component of $A$ is reduced, \cite[Lemma 5.1]{bb2} gives $h^1(\mathcal {I}_{W_1}(d-1)) >0$.
Remark \ref{w3.0} gives $e_1\ge d+4$ and hence $\deg (W_1) \le 2d$. Hence either there is a line $R\subset \mathbb {P}^4$ with $\deg (R\cap W_1)\ge d+1$
or $\deg (W_1) =2d$, $e_1=d+4$ and there is a reduced conic $F\subset \mathbb {P}^4$ such that $W_1\subset F$. The second case is impossible, because
it would give $e_1\ge 2d+1$. Therefore there is a line $R\subset \mathbb {P}^4$ with $\deg (R\cap W_1)\ge d+1$. We also have $e_1\le 2d+3$. Since $h^1(H_1,\mathcal {I}_{N_1\cap W_0}(d)) >0$ and $e_1\le 2d+3$, either there is a line $L\subset H_1$ with $\deg (L\cap W_0)\ge d+2$ or there is a conic $F\subset H_1$
such that $\deg (F\cap W_0)\ge 2d+2$ (Lemma \ref{w1}).

\quad (c1) Assume the existence of a line $L\subset H_1$ with $\deg (L\cap W_0)\ge d+2$. Since $\deg (R\cap A)\le 2$, we have $R\cap (B\setminus B\cap H_1))\ne
\emptyset$ and hence $R\ne L$. Let $M\subset \mathbb {P}^4$ be a hyperplane containing $\langle L\cup R\rangle$ and with maximal $\deg (M\cap W_0)$.
If $L\cap R\ne \emptyset$, then $\deg (W_0\cap (L\cup R)) \ge 2d+2$ and hence $\deg (M\cap W_0)\ge 2d+3$. If $L\cap R =\emptyset$, then $\deg (M\cap W_0)
\ge 2d+3$. Since $h^1(\mathcal {I}_{\mbox{Res}_M(W_0)}(d-1)) >0$ by \cite[Lemma 5.1]{bb2}, we first get $\deg (\mbox{Res}_M(W_0))\ge d+1$
and then $e_1= \deg (W_0\cap M)=2d+3$, $\deg (\mbox{Res}_M(W_0))= d+1$ and the existence of a line $D\subset \mathbb {P}^4$ such
that $\mbox{Res}_M(W_0)\subset D$. See step (b1.1.1); in this case $J_3=L$, $\{J_1,J_2\} = \{R,D\}$. A posteriori we get $R\cap D =\{O_1\}$
and $A_2\subset L$; $B\cap (R\cup D)$ is as described in \cite[Proposition 7]{b}.

\quad (c2) Assume the existence of the conic $F$, but the non-existence of any line $L\subset H_1$ with $\deg (L\cap W_0)\ge d+2$. Remark \ref{w3.0}
gives that $F$ is reduced and that if $F =J_1\cup J_2$ with each $J_i$ is a line, then $\deg (J_i\cap W_0)=d+1$, $i=0,1$. Since $e_1\le 2d+3$
and $A$ spans $\mathbb {P}^4$, we get $e_1=2d+3$, $\deg (F\cap W_0)=2d+2$ and $\deg (W_0\cap \langle F\rangle )=2d+2$. Since $h^1(\mathcal {I}_{W_1}(d-1)) >0$
by \cite[Lemma 5.1]{bb2} and $\deg (W_1) \le d+1$, we get $\deg (W_1) =d+1$ and the existence of a line $J\subset \mathbb {P}^4$ such that
$W_1\subset J$. Since $\deg (J\cap A)\le 2$, we have $J\cap (B\setminus B\cap H_1)\ne \emptyset$ and hence $J\nsubseteq F$.
If $J\cap \langle F\rangle \ne \emptyset$, then $e_1\ge 2d+2+d+1-2$, a contradiction. Therefore $J\cap \langle F\rangle =\emptyset$. We go as in step (b1.2), i.e. as in the last part
of step (b.1.1.1).
\end{proof}

\begin{proposition}\label{w5}
Fix integers $s\ge 0$, $m\ge s+2$ and $d \ge 2m+1$. Fix a degree $3$ curvilinear zero-dimensional scheme $A_1\subset \mathbb {P}^m$ with $\dim (\langle A_1\rangle )=2$ and a set
$S\subset \mathbb {P}^m$ such that $S\cap A_1=\emptyset$, $\sharp (S) = s$, $S$ is linearly independent and $\langle S\rangle \cap \langle A_1 \rangle
=\emptyset$. Set $A:= A_1\cup S$.
Fix any $P\in \langle \nu _d(A)\rangle$ such that $P\notin \langle \nu _d(A')\rangle$ for any $A'\subsetneq A$. There is a unique $P_1\in \langle \nu _d(A_1)\rangle$
such that $P\in \langle \{P_1\}\cup \nu _d(S)\rangle$. We have $P_1\notin \langle \nu _d(A')\rangle$ for any $A'\subsetneq A$.
Then $r_{m,d}( P) =s+2d-1$ and every $B\in \mathcal {S} ( P)$ is of the form $B = B_1\sqcup S$ with $B_1\in \mathcal {S}( P_1)$ (and the converse holds).
The elements of $\mathcal {S}( P_1)$ are described in the case $m=2$ of \cite[part (b3) of Theorem 4 and part (f) of \S 4]{b}.
\end{proposition}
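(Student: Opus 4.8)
The plan is to mirror, step for step, the proof of Proposition \ref{w3}, with the planar degree-$3$ computation imported from \cite{b} playing the role that Sylvester's theorem plays there, and with the heavier line-and-conic bookkeeping of Proposition \ref{w4} needed to clear the degenerate cases. First I would reduce to $\langle A\rangle =\mathbb{P}^m$ by concision (\cite[Exercise 3.2.2.2]{l}); since $\dim\langle A_1\rangle =2$, $\sharp(S)=s$ and $\langle A_1\rangle\cap\langle S\rangle=\emptyset$, this forces $m=s+2$ and makes $A$ a linearly independent scheme of degree $m+1$. Because $\langle A_1\rangle\cap\langle S\rangle=\emptyset$ and $h^1(\mathcal{I}_A(d))=0$, the span $\langle\nu_d(A)\rangle$ is the join of $\langle\nu_d(A_1)\rangle$ and $\langle\nu_d(S)\rangle$; this yields the unique $P_1\in\langle\nu_d(A_1)\rangle$ with $P\in\langle\{P_1\}\cup\nu_d(S)\rangle$, and the minimality hypothesis on $P$ transfers to $P_1\notin\langle\nu_d(A')\rangle$ for every $A'\subsetneq A_1$. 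I would then induct on $m\ge 2$, the base case $m=2$ (i.e. $s=0$, so $A=A_1$ and $P=P_1$) being exactly \cite[part (b3) of Theorem 4 and part (f) of \S 4]{b}, which gives $r_{2,d}(P_1)=2d-1$ together with the description of $\mathcal{S}(P_1)$.

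For the upper bound, choosing $B_1\in\mathcal{S}(P_1)$ (so $\sharp(B_1)=2d-1$, $B_1\subset\langle A_1\rangle$, hence $B_1\cap S=\emptyset$) gives $P\in\langle\nu_d(B_1\cup S)\rangle$, whence $r_{m,d}(P)\le s+2d-1$. The substance is the reverse inequality together with the structure of an arbitrary $B\in\mathcal{S}(P)$. Fix such a $B$, set $W_0:=A\cup B$, so $\deg(W_0)\le(s+3)+(s+2d-1)=2m+2d-2$; since $A$ is not reduced while $B$ is, $A\ne B$ and hence $h^1(\mathcal{I}_{W_0}(d))>0$ (\cite[Lemma 1]{bb1}). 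I introduce the maximal-intersection hyperplanes $H_i$, the residuals $W_i$, the integers $e_i$ and the critical index $g$ exactly as in step (b) of the proof of Theorem \ref{i2}. As there, if $e_i\le m-1$ the process stops, so $g\le d$; the monotonicity $e_i\ge e_{i+1}$ and the lower bound $e_i\ge d+m+2-g$ for $i<g$ (derived as in Proposition \ref{w3}) give $\deg(W_0)\ge(g-1)(d+m+2-g)+1$, which already exceeds $2m+2d-2$ once $g\ge 3$; the case $g=d$ is excluded by the same count as in Proposition \ref{w3} using $d\ge 2m+1$. Hence $g\in\{1,2\}$.

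The surviving good case is $g=1$ with $h^1(\mathcal{I}_{W_1}(d-1))=0$: then \cite[Lemma 5.1]{bb2} gives $A_1\subset H_1$ and $\mbox{Res}_{H_1}(A)=\mbox{Res}_{H_1}(B)$, and because $A$ spans $\mathbb{P}^m$ and is in linearly general position, a hyperplane containing the plane $\langle A_1\rangle$ can absorb at most $s-1$ of the $s$ points of $S$, so exactly one point $Q$ of $S$ lies off $H_1$ and $B\setminus B\cap H_1=\{Q\}$. Grassmann's formula then writes $\langle\nu_d(A)\rangle\cap\langle\nu_d(B)\rangle$ as the join of $\nu_d(Q)$ with $\langle\nu_d(A\setminus\{Q\})\rangle\cap\langle\nu_d(B\setminus\{Q\})\rangle$, producing a point of the smaller configuration $A_1\cup(S\setminus\{Q\})$ inside $H_1\cong\mathbb{P}^{m-1}$ to which the inductive hypothesis applies (the minimality passing to it as in Proposition \ref{w3} case (b1)); this returns $B\cap H_1=B_1\sqcup(S\setminus\{Q\})$ with $B_1\in\mathcal{S}(P_1)$, i.e. $B=B_1\sqcup S$, and forces $\sharp(B)=s+2d-1$. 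The two remaining cases, $g=2$ and $g=1$ with $h^1(\mathcal{I}_{W_1}(d-1))>0$, must be shown impossible. Here the naive dimension count of Proposition \ref{w3} is too weak because $\deg(W_0)$ is of size $\approx 2d$; instead I would locate lines and conics via Lemma \ref{w1} and Remark \ref{w2} and control their mutual position as in case (b) of Proposition \ref{w4}. Typically one extracts a line $L$ with $\deg(L\cap W_1)\ge d$ and, after a second residuation through hyperplanes containing $L$, a distinct line $R$, so that a hyperplane through $L\cup R$ yields $e_1\ge 2d-1$ while $e_2\ge d$; then $e_1+e_2\ge 3d-1>2m+2d-2$ (using $d\ge 2m+1$), a contradiction. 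The line-plus-conic variants are disposed of the same way. The converse assertion is immediate from $\langle A_1\rangle\cap\langle S\rangle=\emptyset$.

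I expect the main obstacle to be precisely the exclusion of these two degenerate cases: unlike in Proposition \ref{w4}, where all components of $A$ are non-reduced, here the reduced points $S$ prevent a free use of \cite[Lemma 5.1]{bb2} (the same difficulty signalled in step (d) of the proof of Theorem \ref{i2}), so each appeal to it must be paired with a residual-sequence check, and the conic analysis of Lemma \ref{w1} and Remark \ref{w2} must be carried through to force the intersection-degree sums past $2m+2d-2$. By contrast, the genuinely delicate geometry --- the behaviour of the degree-$3$ planar scheme $A_1$ and its rank-$(2d-1)$ decompositions --- is entirely imported from \cite{b}, and the Horace residuation is imported from the proof of Theorem \ref{i2}; the only new content is checking that these two inputs glue along the single-point peeling that drives the induction on $m$.
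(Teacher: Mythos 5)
Your overall architecture coincides with the paper's: concision down to $m=s+2$, induction on $m$ with the base case $m=2$ imported from \cite{b}, the upper bound from any $B_1\in \mathcal {S}(P_1)$, the Horace residuation with maximal hyperplanes giving $g\le 2$, and a good branch in which \cite[Lemma 5.1]{bb2} forces $A_1$ and all of $B$ except some points of $S$ into a hyperplane, after which Grassmann's formula peels those points off and the inductive hypothesis applies. The numerical bookkeeping ($\deg (W_0)\le 2d+2m-2$, the exclusion of $g\ge 3$, the use of $d\ge 2m+1$) also matches the paper.

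The step that would fail as stated is your assertion that the whole of the case $g=2$ ``must be shown impossible'' by extracting two distinct lines and overshooting $e_1+e_2$. That mechanism only fires while the successive residual schemes keep having positive $h^1$, so that Lemma \ref{w1} keeps producing lines or conics. Inside $g=2$ there is a branch in which, after residuating through a hyperplane $N_1$ containing the first line $L$, one has $h^1(\mathcal {I}_{Z_1}(d-1))=0$; then no second line exists, and \cite[Lemma 5.1]{bb2} instead yields $A_1\subset N_1$ together with $S\setminus S\cap N_1=B\setminus B\cap N_1$. The paper's step (a1.2.1) resolves exactly this branch not by a numerical contradiction but by the same peel-and-induct argument as the good $g=1$ branch, with a peeled subset $E\subseteq S$ that need not be a single point (so the induction must be set up to drop $m$ by $\sharp (E)$; your claim that exactly one point of $S$ lies off the hyperplane is not justified in general, though harmless if the induction is phrased for arbitrary nonempty $E$). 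So either the peeling/induction step must be made available inside $g=2$ as well, or one must add a separate argument there (for instance: in that branch $\mbox{Res}_{N_1}(W_0)=S\setminus S\cap N_1$ has degree at most $m-2$, whence $e_2\le m-2<d+1$, contradicting $e_2\ge d+1$); either way the two-line count alone does not close the case. A second, smaller point: when $m=3$ the two lines $L$ and $R$ may be skew, hence contained in no common hyperplane, and the contradiction must instead go through a general quadric surface containing $L\cup R$ and the curvilinearity of $A_1$ (the paper's steps (a1.1) and (a1.2.1.2)); you point to Proposition \ref{w4} for this, which is the right source, but it is precisely the configuration that cannot be handled by the hyperplane count you describe as ``typical.''
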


\begin{proof}
The last sentence follows from concision (\cite[Exercise 3.2.2.2]{l}). 

The scheme $A$ is linearly independent and $\deg (A) =3+s$. Hence $\nu _d(A)$ is linearly independent. Every element of $\mathcal {S}( P)$ is contained in $\langle A\rangle$ (\cite[Exercise 3.2.2.2.]{l}). Hence
it is sufficient to do the case $s=m-2$. Fix any $B\in \mathcal {S} ( P)$. Since $P\in \langle \nu _d(E\cup S)\rangle$ for any $E\in \mathcal {S}( P_1)$, we have
$r_{m,d}( P) \le m+2d-3$ and it
is sufficient to prove that $B = S\sqcup B_1$ with $B_1\in \mathcal {S}( P_1)$. Set $\{O\} := (A_1)_{red}$ and let ${\bf{w}}$ the degree two
subscheme of $A_1$.
If $m=2$, then this proposition is the case $m=2$ of \cite[part (b3) of Theorem 4]{b}. Hence we may assume
$m>2$ and use induction on $m$. Set $W_0:= A\cup B$ and define $H_i$, $e_i$ and $g$ as in step (b) of the proof 
of Theorem \ref{i2}. If $e_i\le m-1$, then $e_{i+1}=0$ and $W_i=\emptyset$. We have $w_0:= \deg (W_0) \le 2d+2m-2$. Assume for the moment $g =d+1$. We get $h^1(\mathcal {I}_{W_d\cap H_{d+1}}) >0$, i.e.
$e_{d+1} \ge 2$. Since $e_{d+1}>0$, we have $e_i\ge m$ for $i\le d$ and hence $2d+2m-2 \le md+2$, which is absurd if $m>2$; for $m=2$ we have $s=0$, $P_1=P$
and the statement is a tautology. Assume $g\ge d+2$. We get $W_{d+1} \ne \emptyset$ and hence $2d+2m-2\ge 1 +m(d+1)$, a contradiction. Now
assume $2\le g \le d$. Using Remark \ref{w3.0} we get $2d+2m-2 \ge g(d+1+m-g) -m+2$. Since $d\ge 2m+1$, we get $g\le 2$. Hence $1\le g\le 2$.

\quad (a) Assume $g=2$. Since $e_1\ge e_2$ and $4d> 2d+m-2$, we have $e_1\le 2d-1$.
By \cite[Lemma 34]{bgi} there is a line $L\subset H_2$ such that $\deg (L\cap W_1) \ge d+1$. Since $e_2\ge d+1$, we have $e_1\le d+2m-3$. Set $Z_0:= W_0$.
Set $k:= \dim (\langle H_2\cap W_1\rangle )$. If $k\le m-2$, then the maximality property of the integer
$e_2$ gives $W_2=\emptyset$. Remark \ref{w1} for the integer $d-1$ gives $e_2 \ge d+k$. Since $e_1\ge e_2$, if $k=m-1$ we have $e_1+e_2=2d+2m-2$.
Hence in all cases we have $W_2=\emptyset$. Let $M\subset \mathbb {P}^m$ be a hyperplane containing $\langle H_2\cap W_1\rangle$ and with maximal $\deg (W_0\cap M)$
(we have $M= H_2$ if $k=m-1$). Since $A$ spans $\mathbb {P}^m$, we have $\deg (W_0\cap M) \ge d+m-1$ and hence $\deg (\mbox{Res}_M(W_0))\le d+m-1$.

\quad (a1) Assume $h^1(M,\mathcal {I}_{M\cap W_0}(d)) =0$. Set $Z_0:= W_0$. A residual exact sequence gives $h^1(\mathcal {I}_{\mbox{Res}_M(W_0)}(d-1)) >0$. Since $\deg (\mbox{Res}_M(W_0))\le d+m-1 \le 2d-1$, there is a line $L\subset \mathbb {P}^m$ such that $\deg (L\cap \mbox{Res}_M(W_0))\ge d+1$. Let $N_1\subset \mathbb {P}^m$ be a hyperplane
such that $N_1\subset L$ and $f_1:= \deg (N_1\cap W_0)$ is maximal and set $Z_1:= \mbox{Res}_{N_1}(Z_0)$. Since $A$ spans $\mathbb {P}^m$, then $f_1\le d+m-1$ and hence $\deg (Z_1) \le d+m-2$. For all integers $i\ge 2$ define $f_i,Z_i,N_i$ as in step (b1) of the proof of Theorem \ref{i2}.
We have $f_i\ge f_{i+1}$ for all $i\ge 2$ and $f_{i+1}=0$ if $f_i\le m-1$.
By residual exact sequence like (\ref{eqc1}) we get the existence of a minimal integer $g'>0$ such that $h^1(\mathcal {I}_{W_{g'-1}\cap N_{g'}}(d+1-g')) >0$.
As in step (b1) of the proof of Theorem \ref{i2} we get $g'\le 2$.

\quad (a1.1) Assume $g'=2$. Since $d+m-2 \le 2(d-2)+1$, there is a line $R\subset \mathbb {P}^m$ such that $\deg (R\cap Z_1)\ge d+1$. Since $\deg (R\cap A)\le 2$,
we have $\deg (R\cap Z_1\cap B) >0$. Since $Z_1\cap B =B\setminus B\cap N_1$ and $L\subset N_1$, we have $R\ne L$. First assume  that either
$L\cap R\ne \emptyset$ or $m\ge 4$ (i.e. assume that $L\cup R$ is contained in a hyperplane of $\mathbb {P}^m$). Since $\deg (L\cap R)=1$ as schemes,
we get $e_1\ge d+1+d+1-1$. Hence $2d+1 \le d+2m-3$, contradicting our assumption $d \ge 2m+1$. Now assume $m=3$ and $L\cap R =\emptyset$.
Since $A_1$ is connected and spans a plane, we have $A_1\nsubseteq L\cup R$.
Fix a general $Q\in |\mathcal {I}_{L\cup R}(2)|$. Since $A_1$ is curvilinear and $L\cup R$ is the scheme-theoretic base locus of the linear system $|\mathcal {I}_{L\cup R}(2)|$,
we have $A_1\nsubseteq Q$. Since $\deg (\mbox{Res}_Q(W_0)) \le 2d+2m-2 -(d+1)-(d+1) \le d-1$ (by our assumption $d\ge 2m+1$), we have $h^1(\mathcal {I}_{\mbox{Res}_Q(W_0)}(d-2)) =0$. By \cite[Lemma 5.1]{bb2} applied to the degree two surface $Q$ instead of a plane
we get $A_1\subset Q$, a contradiction.

\quad (a1.2) Assume $g'=1$.

\quad (a1.2.1) Assume $h^1(M,\mathcal {I}_{M\cap W_0}(d)) =0$.  By \cite[Lemma 5.1]{bb2} we have $A_1\subset M$ and $S\setminus S\cap M = B\setminus B\cap M$.
Since $A$ spans $\mathbb {P}^m$, we have $S\ne S\cap M$. Set $E:= S\cap B$. We just saw that $E\ne \emptyset$ and that $(W_0\setminus E)\subset M$.
Since $E\subset S$, we have $h^1(\mathcal {I}_E(d-1)) =0$. Hence a residual exact sequence and Grassmann's formula
gives that $\langle \nu _d(A)\rangle \cap \langle \nu _d(B)\rangle$ is spanned by its subspaces $\langle \nu _d(A\setminus E)\rangle \cap \langle \nu _d(B\setminus E)\rangle$
and $\langle \nu _d(E)\rangle$ and that these subspaces are disjoint. Take any $Q_1\in \langle \nu _d(A\setminus E)\rangle \cap \langle \nu _d(B\setminus E)\rangle$
such that $P\in \langle \{Q_1\}\cup \nu _d(E)\rangle$. Since $P\notin \langle \nu _d(A')\rangle$ for any $A'\subsetneq A$, we
have $Q_1\notin \langle \nu _d(A')\rangle$ for any $A'\subsetneq A$. $Q_1$ is the only point of $\langle \nu _d(A\setminus E)\rangle$
such that $P\in \langle \nu _d(E)\cup \{Q_1\}\rangle$ and $P_1$ is associated also to $Q_1$. By induction on $m$ (or \cite{b} if $m-\sharp (E) =2$) we get $r_{m,d}(Q_1) = 2d+m-3 -\sharp (E)$
and that every $U\in \mathcal {S}( Q_1)$ is of the form $U\sqcup S\setminus E$ with $\sharp (U)=2d-1$ and $U\in \mathcal {S}( P_1)$.
See Step (b2) below.

\quad (a1.2.1) Assume $h^1(M,\mathcal {I}_{M\cap W_0}(d)) >0$. Since $\deg (M\cap W_0)\le e_1\le 2d-1$, there is a line $R\subset M$ such that $\deg (R\cap W_0)\ge d+2$.

\quad (a1.2.1.1) Assume $R =L$. Take $N_i, Z_i, f_i$ as in step (a1). Now it is easier, because $f_1\ge d+m$. 

\quad (a1.2.1.2) Assume $R\ne L$. First assume that either $m\ge 4$ or $m=3$ and $R\cap L\ne \emptyset$, i.e. assume $\langle R\cup L\rangle \ne \mathbb {P}^m$.
Since $\deg (R\cap L) = 1$, we get $e_1 \ge d+2+d+1-1$, a contradiction. Now assume $m=3$ (and hence $w_0\le 2d+4$) and $L\cap R  =\emptyset$. We
have $\deg (W_0\cap (L\cup R)) \ge 2d+3$ and $A_1\nsubseteq L\cup R$. Hence one of the lines $L, R$ (call it $D$ and call $J$ the other one) is spanned by the only degree two
subscheme of $A$ and $B\cup S\subset L\cup R$. Since $A_1$ is curvilinear and $L\cup R$ is the scheme-theoretic base locus of $|\mathcal {I}_{L\cup R}(2)|$,
there is a quadric surface $Q\supset L\cup R$ such that $A_1\nsubseteq Q$. Since $\deg (\mbox{Res}_Q(W_0)) =1$,
we have $h^1(\mathcal {I}_{\mbox{Res}_Q(W_0)}(d-2)) =0$. By \cite[Lemma 5.1]{bb2} applied to the degree two surface $Q$ instead of a plane
we get $A_1\subset Q$, a contradiction.

\quad (b) Assume $g=1$. Since $W_0\cap H_1$ spans $H_1$, we have $e_1\ge d+m+1$ (Remark \ref{w3.0}).

\quad (b1) Assume $h^1(\mathcal {I}_{W_1}(d-1)) >0$. Therefore $\deg (W_1) \ge d+1$ and hence $e_1\le d+2m-3$. Since $(d+m+1) +2d > 2d+2m-2$, we have
$\deg (W_1) <2d$. Hence there is a line $L\subset \mathbb {P}^m$ such that $\deg (W_1\cap L) \ge d+1$. Hence $e_1\le d+2m-3 \le 2d+1$.
Therefore there is a line $R\subset H_1$ such that $\deg (R\cap W_0)\le d+2$. Since $\deg (A\cap L) \le 2$, we have $L\cap (B\setminus B\cap H_1) \ne \emptyset$.
Since $R\subset H_1$, we have $R\ne L$. As in step (a1.1) we get a contradiction.

\quad (b2) Assume $h^1(\mathcal {I}_{W_1}(d-1)) =0$. Copy step (a1.2.1.2) with $H_1$ instead of $M$.\end{proof}

\begin{remark}\label{e1e}
Fix integers $b>0$ and $m\ge b-1$. Let $E\subset \mathbb {P}^m$ be a curvilinear connected scheme of degree $b$. Set $\{O\}:= E_{red}$.
The local Hilbert scheme of the regular local ring $\mathcal {O}_{\mathbb {P}^m,O}$ is smooth and irreducible at $E$ and of dimension
$(m-1)(b-1)$ (\cite{g}). Therefore, if we don't prescribe $O$, but only the integer $b$ we get that the set of all possible $E$ is parametrized
by an irreducible variety of dimension $m+(m-1)(b-1)$. Alternatively, we may consider the equisingular deformations
of $E$ in $\mathbb {P}^m$ and get (in the curvilinear case) that it is parametrized by a smooth variety
of dimension $m+(m-1)(b-1)$. If $d\ge b-1$, then $\dim (\langle \nu _d(E)\rangle )=b-1$. If we take a type $(s;b_1,\dots ,b_s)$ with $\sum _ib _i =5$
and general $E_1,\dots ,E_s$ with $E_i$ curvilinear and $\deg (E_i)=b_i$, then in the computation of Remark \ref{e0e} below
we only need to use that $\dim (\langle \nu _d(E_1\cup \cdots \cup E_s)\rangle )=4$ and get the dimensions claimed in Remark \ref{e0e} and Proposition \ref{zzz1} below.
\end{remark}

\begin{remark}\label{zzz2}
Fix a combinatorial type $(s;b_1,\dots ,b_s)$ with $s\ge 1$ and $b_1+\cdots +b_s=5$. All $A$ evincing the the border rank of some degree $d \ge 9$ polynomial
and with combinatorial type $(s;b_1,\dots ,s)$ is obtained in the following way. Fix a $4$-dimensional linear subspace $M\subseteq \mathbb {P}^m$ and $s$
linearly independent points $O_1,\dots ,O_s\in M$. For $i=1,\ldots ,s$ let $M_i\subseteq M$ be a $(b_i-1)$-dimensional linear subspace such that $O_i\in M_i$, $\dim (M_i) =b_i$
and $M_1\cup \cdots \cup M_s$ spans $M$, i.e. the subspaces $M_1,\dots ,M_s$ are linearly independent. Let $D_i\subseteq M_i$ be the rational normal curve of $M_i$
with the convention that $D_i=\{O_i\}$ if $b_i=1$ (i.e. if $M_i =\{O_i\}$) and $D_i =M_i$ if $b_i=2$ (i.e. if $M_i$ is a line). Let $A_i\subset D_i$ be the effective degree $b_i$
divisor of $D_i$ with $O_i$ as its support with the convention $A_i =\{O_i\}$ if $b_i=1$. Set $A:= A_1\cup \cdots \cup A_s$. Since each $A_i$ spans $M_i$, $A$
is a curvilinear degree $5$ scheme of combinatorial type $(s;b_1,\dots ,b_s)$ spanning $M$. Since $d\ge 4$, we have $\dim \langle \nu _d(A)\rangle =4$. Take any $P\in \langle \nu _d(A)\rangle$ such that $P\notin \langle \nu _d(A')\rangle$ for any $A'\subsetneq A$. Since $d\ge 5$, $P$ has border rank $5$ and cactus rank $5$ and $A$ is the
only scheme evincing the cactus rank of $P$ (\cite[Corollary 2.7]{bgl}). All possible $M, O_1,\dots ,O_s,D_1,\dots ,D_s$ are projectively equivalent and so all schemes
with combinational type $(s;b_1,\dots ,b_s)$ are projectively equivalent.
\end{remark}

\begin{remark}\label{e0e}
Since $d\ge 5$, a theorem of Alexander and Hirschowitz gives that $\sigma _5(X_{m,d})$ and $\sigma _5(X_{m,d})\setminus \sigma _4(X_{m,d})$ have dimension
$5m+4$. Now assume $d\ge 9$ so that each point of $\sigma _5(X_{m,d})\setminus \sigma _4(X_{m,d})$ has a unique type.
For any type $(s;b_1,\dots ,b_s)$ let $\sigma _5(X_{m,d})(s;b_1,\dots ,b_s)$ denote the set of all $P\in \sigma _5(X_{m,d})\setminus \sigma _4(X_{m,d})$
with type $(s;b_1,\dots ,b_s)$. Let $\sigma _5(X_{m,d})(s;b_1,\dots ,b_s)'$ denote the subset of the algebraic set $\sigma _5(X_{m,d})(s;b_1,\dots ,b_s)$ formed by all $P$ whose
cactus rank is evinced by a linearly independent scheme. Let $\sigma _5(X_{m,d})(s;b_1,\dots ,b_s)''$ denote the subset of $\sigma _5(X_{m,d})(s;b_1,\dots ,b_s)$ formed by all $P$ whose
cactus rank is evinced by a curvilinear scheme. In the proof of Theorem \ref{i2} we checked that $\sigma _5(X_{m,d})(s;b_1,\dots ,b_s)''\subseteq \sigma _5(X_{m,d})(s;b_1,\dots ,b_s)'$. By a theorem of Chevalley on the image of an algebraic variety by an algebraic map (\cite[Ex. I.3.18]{h}) 
all sets $\sigma _5(X_{m,d})(s;b_1,\dots ,b_s)$, $\sigma _5(X_{m,d})(s;b_1,\dots ,b_s)''$, $\sigma _5(X_{m,d})(s;b_1,\dots ,b_s)'$ are constructible. Thus we may speak about the irreducible components of the sets
$\sigma _5(X_{m,d})(s;b_1,\dots ,b_s)$, $\sigma _5(X_{m,d})(s;b_1,\dots ,b_s)''$, $\sigma _5(X_{m,d})(s;b_1,\dots ,b_s)'$ and the dimension of each of this irreducible component; indeed, they are the irreducible components of the closure (in the Zariski topology) of the subsets $\sigma _5(X_{m,d})(s;b_1,\dots ,b_s)$ or $\sigma _5(X_{m,d})(s;b_1,\dots ,b_s)'$
of the projective variety
$\sigma _5(X_{m,d})$. Take two schemes $A_1,A_2$ with the same type. Since $\deg (A_1\cup A_2)\le 10 \le d-1$, we have $h^1(\mathcal {I}_{A_1\cup A_2}(d))=0$.
Hence Grassmann's formula gives $\langle \nu _d(A_1)\rangle \cap \langle \nu _d(A_2)\rangle =\langle \nu _d(A_1\cap A_2)\rangle$. Using the description of all possible
schemes of type $(s;b_1,\dots ,b_s)$ given in the proof of Theorem \ref{i2} and Remark \ref{e1e} we get 
that each set $\sigma _5(X_{m,d})(s;b_1,\dots ,b_s)''$ is irreducible and with the following dimension:
\begin{itemize}
\item[(i)] $\sigma _5(X_{m,d})(1;5)''$ has dimension $5m$.
\item[(ii)] $\sigma _5(X_{m,d})(2;3,2)''$ has dimension $5m+1$.
\item[(iii)] $\sigma _5(X_{m,d})(2;4,1)''$ has dimension $5m+1$.
\item[(iv)] $\sigma _5(X_{m,d})(3;3,1,1)''$ has dimension $5m+2$.
\item[(v)] $\sigma _5(X_{m,d})(3;2,2,1)''$ has dimension $5m+2$.
\item[(vi)] $\sigma _5(X_{m,d})(4;2,1,1,1)''$ has dimension $5m+3$.
\item[(vii)] $\sigma _5(X_{m,d})(5;1,1,1,1)''$ has dimension $5m+4$.
\end{itemize} 
We get that $\sigma _5(X_{m,d})(s;b_1,\dots ,b_s)''$ has codimension $5-s$ in $\sigma _5(X_{m,d})$ (this is the expected codimension). Since each
$\sigma _5(X_{m,d})(s;b_1,\dots ,b_s)''$ is irreducible, we get that $\sigma _5(X_{m,d})(s;b_1,\dots ,b_s)'$ is irreducible of dimension $5m+s-1$ and
that it contains a non-empty Zariski open subset of $\sigma _5(X_{m,d})(s;b_1,\dots ,b_s)''$, proving the following result.
\end{remark}
\begin{proposition}\label{zzz1}
For each $(s;b_1,\dots ,b_s)$ the sets $\sigma _5(X_{m,d})(s;b_1,\dots ,b_s)'$ and $\sigma _5(X_{m,d})(s;b_1,\dots ,b_s)''$ are irreducible and of dimension
$5m+s-1$.
\end{proposition}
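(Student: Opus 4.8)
The plan is to realize each stratum as the image of an irreducible incidence variety whose dimension can be computed directly, and then to transport irreducibility and dimension through an injective projection. Fix a type $(s;b_1,\dots,b_s)$ with $b_1+\cdots+b_s=5$. By Remark \ref{e1e} the connected curvilinear schemes of degree $b_i$ in $\mathbb P^m$ vary in an irreducible family of dimension $m+(m-1)(b_i-1)$, so the schemes $A=A_1\sqcup\cdots\sqcup A_s$ of the given type form an irreducible parameter space $\mathcal H$ of dimension
\[
\sum_{i=1}^{s}\bigl(m+(m-1)(b_i-1)\bigr)=sm+(m-1)(5-s)=5m+s-5 .
\]
Since $m\ge 4$ and $\sum_i b_i=5$, being linearly independent (i.e. $\dim\langle A\rangle=4$) is a nonempty open condition on $\mathcal H$, cutting out a dense open $\mathcal H^{\circ}$ of the same dimension. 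For $A\in\mathcal H^{\circ}$ one has $\langle\nu_d(A)\rangle\cong\mathbb P^4$: such an $A$ is curvilinear of degree $5\le d+1$ spanning a $\mathbb P^4$, hence lies on a rational normal curve as in step (b) of the proof of Theorem \ref{i2}, and the $d$-th Veronese image of that curve is again a rational normal curve, so $\nu_d(A)$ is linearly independent.

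Next I would form the incidence variety
\[
\mathcal I=\{(A,P):A\in\mathcal H^{\circ},\ P\in\langle\nu_d(A)\rangle,\ P\notin\langle\nu_d(A')\rangle\ \mbox{for all}\ A'\subsetneq A\}.
\]
For each fixed $A\in\mathcal H^{\circ}$ the fibre is the complement in $\langle\nu_d(A)\rangle\cong\mathbb P^4$ of the finitely many proper subspaces $\langle\nu_d(A')\rangle$, hence a nonempty Zariski open subset of a projective $4$-space. The projection $\mathcal I\to\mathcal H^{\circ}$ is therefore a fibration with irreducible base and irreducible $4$-dimensional fibres, so $\mathcal I$ is irreducible of dimension $(5m+s-5)+4=5m+s-1$.

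I would then study $\pi\colon\mathcal I\to\mathbb P^r$, $(A,P)\mapsto P$. By Remark \ref{ww1} (\cite[Corollary 2.7]{bgl}) the hypothesis $d\ge 9$ forces the scheme evincing the cactus rank of $P$ to be unique, so $A$ is recovered from $P$ and $\pi$ is injective; its image is exactly the locus $\sigma_5(X_{m,d})(s;b_1,\dots,b_s)'$ of points whose (unique) evincing scheme is linearly independent of the given type. By Chevalley's theorem (\cite[Ex. I.3.18]{h}) this image is constructible, and being the injective image of the irreducible $\mathcal I$ it is irreducible of dimension $5m+s-1$; passing to its closure in $\sigma_5(X_{m,d})$ gives the assertion for the linearly independent stratum. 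The resulting codimension $5-s$ matches the expected value coming from $\dim\sigma_5(X_{m,d})=5m+4$ (Alexander--Hirschowitz).

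It remains to compare $\sigma_5(X_{m,d})(s;b_1,\dots,b_s)'$ with $\sigma_5(X_{m,d})(s;b_1,\dots,b_s)''$. Every evincing scheme is Gorenstein (\cite[Lemma 2.3]{bb+}), and a Gorenstein degree-$5$ scheme with $\dim\langle A\rangle=4$ is curvilinear by \cite[Theorem 1.3]{eh}; hence the linearly independent stratum is a dense open subvariety of the curvilinear stratum, the difference being the locus where the evincing scheme is curvilinear but not linearly independent. I would bound that difference: such schemes form a proper closed subset of $\mathcal H$, of dimension at most $5m+s-6$, and the associated points contribute at most $4$, so this locus has dimension at most $5m+s-2<5m+s-1$. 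Consequently the two strata share the common dense open $\pi(\mathcal I)$ and have the same irreducible closure, whence both are irreducible of dimension $5m+s-1$, which is Proposition \ref{zzz1}. The main obstacle is precisely this last comparison, namely verifying that the non--linearly--independent (respectively non--curvilinear) loci drop dimension; this is exactly where \cite[Theorem 1.3]{eh} and the genericity of linear independence on $\mathcal H$ do the essential work.
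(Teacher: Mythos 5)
Your proposal follows essentially the same route as the paper's own argument (carried out in Remark \ref{e0e}): parametrize the curvilinear schemes of a fixed type by the irreducible family of dimension $5m+s-5$ coming from Remark \ref{e1e}, add the $4$-dimensional fibre $\langle\nu_d(A)\rangle\cong\mathbb P^4$ minus the finitely many spans of proper subschemes, and use the uniqueness of the evincing scheme for $d\ge 9$ together with Chevalley's theorem to transport irreducibility and dimension to the stratum in $\mathbb P^r$. The computation $5m+s-5+4=5m+s-1$ and the identification of the primed stratum as the injective image of the incidence variety match the paper exactly.

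One inference at the very end should be tightened. From ``$\sigma_5(X_{m,d})(s;b_1,\dots,b_s)''=\sigma_5(X_{m,d})(s;b_1,\dots,b_s)'\cup D$ with $\dim D\le 5m+s-2$'' you cannot conclude that the double-primed set is irreducible: a union of an irreducible set with a lower-dimensional piece need not be irreducible unless the lower-dimensional piece lies in the closure of the big one, and the dimension bound alone does not give that containment. The fix is immediate with the machinery you already built: define the incidence variety over all of $\mathcal H$ (not just $\mathcal H^{\circ}$); its fibres over the degenerate schemes are still nonempty open subsets of a $\mathbb P^4$ (since $h^1(\mathcal I_A(d))=0$ for any degree-$5$ scheme when $d\ge 4$), so the total space is irreducible, its image under the (still injective) projection is exactly the double-primed stratum, and density of $\mathcal H^{\circ}$ in $\mathcal H$ then gives both the irreducibility of the double-primed set and the fact that the primed set is dense in it. With that adjustment your argument is complete and coincides with the paper's.
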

To get the dimension of the the set of all associated degree $d$ homogeneous polynomials, add $+1$ to each dimension, because proportional polynomials,
say $f$ and $cf$ with $c\in \mathbb {C}\setminus \{O\}$,
have the same associated point $P\in \mathbb {P}^r$

\section{Remarks on possible further works}\label{S+}

\begin{remark}\label{e2e}
We briefly explain what is missing to complete the stratification by rank of $\sigma _5(X_{m,d})\setminus \sigma _4(X_{m,d})$. The case with very low $d$ seems
to be difficult, full of nasty cases and not very enlightening, but cases like $d=7,8$ may be easy (when $A$ is not unique, just take the one which gives a lower rank). It may be easy
to check which integers $r_{m,d}( P)$ appears for some $P\in \sigma _5(X_{m,d})\setminus \sigma _4(X_{m,d})$ just because only very few integers
are possible. For a fixed $P$ the proof of Theorem \ref{i2} may be not enough to compute the integer $r_{m,d}( P)$, but probably all a priori possible integers arise for
some $P$ for which the proof of Theorem \ref{i2} works.

Fix any integer $d\ge 5$ and any $A$ evincing the cactus rank of some $P$. If $A$ is curvilinear, but not connected, i.e. if $s\ge 2$, then we know its single pieces $A_i$ and in each case by \cite{bgi} or \cite{bb2} we know the rank, $\rho _i$, of any  $Q_i\in \langle \nu _d(A_i))$ such that $Q_i\notin \langle \nu _d(A')\rangle$ for any $A'\subsetneq A_i$. We always have $r_{m,d}( P) \le \sum _{i=1}^{s} \rho _i$. This is a good
upper bound (e.g. we always have $r_{m,d}( P) \le 3d+1$ if $s=2$ and $r_{m,d}( P) \le 2d+1$ if $s=3$), but in some cases we know that it is not
the exact value. The case $s=5$ is trivial and hence
we assume $s\le 4$. If $\dim (\langle A\rangle )=1$, then we have $r_{m,d}( P) =d-3$ by a theorem of Sylvester (\cite{cs}). If $\dim (\langle A\rangle )=2$ (and in particular always if $m=2$), then
an upper bound is obtained
looking at the minimal degree of a reduced curve containing $A$ (see Proposition \ref{t1}).
When $s=1$ and $\dim (\langle A\rangle )=3$ there is a unique non-curvilinear case (\cite[Case III of Theorem 1.3]{eh}). For this case we have a lower bound, $3d-3$, and an upper bound, $4d$, but we don't even know if the integer $r_{m,d}( P)$ is the same for all non-curvilinear $A$ and all $P$
with $A$ as its associated scheme.
\end{remark}

\begin{proposition}\label{t1}
Assume $d\ge 9$. Fix $P\in \mathbb {P}^r$ with $b_{r,m}( P) =5$ and assume the existence of a plane $H\subset \mathbb {P}^m$ such that $P\in \langle \nu _d(H)\rangle$.
Then $r_{m,d}( P) \le 3d$. Let $A$ be the only scheme evincing the border rank of $A$. If either $h^0(H,\mathcal {I}_A(2)) \ge 2$ or $A$ is contained in a reduced conic, then
$r_{m,d}( P) \le 2d$.
\end{proposition}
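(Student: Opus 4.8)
The plan is to reduce to the plane by concision and then route $P$ through the span of the degree-$d$ image of a reduced curve of small degree, applying Remark~\ref{uno.1}.

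First I would invoke concision (\cite[Exercise 3.2.2.2]{l}) to reduce to $m=2$, identifying $H$ with $\mathbb{P}^2$: since $P\in\langle\nu_d(H)\rangle$ and $A$ is the \emph{unique} scheme evincing the (cactus, hence border) rank of $P$, minimality of the concision subspace forces $\langle A\rangle\subseteq H$, and $r_{m,d}(P)=r_{2,d}(P)$ may be computed inside $\mathbb{P}^2$. Thus $A$ becomes a degree $5$ zero-dimensional subscheme of $\mathbb{P}^2$, and every member of a linear system $|\mathcal{I}_A(t)|$ contains $A$ scheme-theoretically.

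For the inequality $r_{m,d}(P)\le 3d$ I would produce a reduced plane cubic through $A$. Because $\deg(A)=5$ and $h^0(\mathcal{O}_{\mathbb{P}^2}(3))=10$, we get $h^0(\mathcal{I}_A(3))\ge 5$, so $|\mathcal{I}_A(3)|$ has projective dimension $\ge 4$. The non-reduced cubics are precisely the curves $L^2M$ with $L,M$ lines; they form the image of $(\mathbb{P}^2)^{*}\times(\mathbb{P}^2)^{*}\to|\mathcal{O}_{\mathbb{P}^2}(3)|=\mathbb{P}^9$, $(L,M)\mapsto L^2M$, an irreducible fourfold which is not a linear space and therefore contains no $\mathbb{P}^4$. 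Hence $|\mathcal{I}_A(3)|$ is not contained in the non-reduced locus, so a general member $F\in|\mathcal{I}_A(3)|$ is a reduced cubic with $A\subseteq F$. As $F$ is reduced of degree $3<d$, Remark~\ref{uno.1} (case $t=3$) gives $r_{2,d}(P)\le 3d-1\le 3d$.

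For the sharper bound $r_{m,d}(P)\le 2d$ there are two cases. If $A$ is contained in a reduced conic $T$, then $T$ is reduced of degree $2<d$ and Remark~\ref{uno.1} (case $t=2$) gives at once $r_{2,d}(P)\le 2d$. If instead $h^0(H,\mathcal{I}_A(2))\ge 2$, then $|\mathcal{I}_A(2)|$ contains a pencil, i.e.\ a line in $|\mathcal{O}_{\mathbb{P}^2}(2)|=\mathbb{P}^5$; the non-reduced conics are the double lines, i.e.\ the rank-one symmetric $3\times3$ matrices, which form the Veronese surface in $\mathbb{P}^5$ and contain no line (a pencil of symmetric matrices has a member of rank $\ge 2$). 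Hence the pencil has a reduced member $T\supseteq A$, and we are reduced to the previous case. The only real content, and the step I expect to be the main obstacle to phrase cleanly, is exactly this guarantee that the relevant linear system is not swallowed by its discriminant, i.e.\ that a reduced member exists; it is handled by the dimension and non-linearity of the double-line and multiple-line loci. Alternatively one may run through the short classification of degree-$5$ zero-dimensional Gorenstein schemes in $\mathbb{P}^2$ (note $5$ is prime, which severely restricts the connected components) and exhibit an explicit reduced conic or cubic through each. Everything else is a direct application of Remark~\ref{uno.1}, and the bound actually obtained, $3d-1$, is slightly stronger than the stated $3d$.
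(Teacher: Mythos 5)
Your proof is correct, and it shares the paper's overall architecture --- concision down to the plane, production of a reduced conic or cubic containing $A$, and an appeal to Remark \ref{uno.1} --- but the step where you produce the reduced curve is genuinely different. The paper starts from the conic $C\supset A$ guaranteed by $h^0(\mathcal {O}_{\mathbb {P}^2}(2))=6>\deg (A)$ and, when $C$ is a double line $2L$, runs an explicit case analysis on $\deg (L\cap A)$: for $\deg (L\cap A)\ge 4$ the residual $\mbox{Res}_L(A)$ is a single point and $L\cup D$ is a reduced conic through $A$, while for $\deg (L\cap A)=3$ it exhibits a reduced cubic through $A$ as a union of $L$ and two further lines; as a byproduct it records that $h^0(\mathcal {I}_A(2))\ge 2$ exactly when some line meets $A$ in degree $\ge 4$. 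You instead argue by dimension against the discriminant: $|\mathcal {I}_A(3)|$ contains a $\mathbb {P}^4$, the locus of non-reduced cubics $\{L^2M\}$ is a closed irreducible fourfold that is not a linear subspace of $\mathbb {P}^9$ (e.g.\ it contains $x^3$ and $y^3$ but not $x^3+y^3$, which is three distinct lines) and hence contains no $\mathbb {P}^4$, so a general member of $|\mathcal {I}_A(3)|$ is reduced; likewise a pencil of conics cannot consist entirely of double lines because the Veronese surface contains no line. Both routes are sound. Yours is more uniform --- no case analysis on $A$, no use of its curvilinear or Gorenstein structure --- and yields the marginally sharper bound $3d-1$; the paper's construction is more explicit about which curves through $A$ realize the bound. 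Your closing aside about degree-$5$ Gorenstein schemes and $5$ being prime does no work and can be dropped; the main argument stands without it.
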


\begin{proof}
By concision (\cite[Exercise 3.2.2.2]{l}) we may assume $m =2$. Since $d\ge 9$, there is a unique zero-dimensional
scheme $A\subset \mathbb {P}^2$ with $\deg (A) = 5$ and evincing the scheme rank and the border rank of $P$. Since $h^0(\mathcal {O}_{\mathbb {P}^2}(2)) =6$, there is a conic $C\supset A$. If $C$ is reduced, then $r_{m,d}( P) \le 2d$, because $C$ is connected, $\dim (\langle \nu_d( C)\rangle )=2d$
and \cite[Proposition 5.1]{lt} holds (with the same proof) for reduced and connected non-degenerate curves. Hence we may assume that $C$ is a double line, say $C=2L$.
It is easy to check that $\deg (L\cap A) \ge 3$. If $\deg (L\cap A)\ge 4$, then $\mbox{Res}_L(A)$ is a single point, $O$, and hence $A$ is contained in any conic $L\cup D$
with $D$ a line containing $O$ (we use that $L$ and $D$ are Cartier divisors of $\mathbb {P}^m$). Hence we may assume $\deg (L\cap A)=3$. In this case we have $A\subset L\cup D_1\cup D_2$
with $D_1$ and $D_2$ any two distinct lines, $\ne L$, and containing the scheme $\mbox{Res}_L(A)$. We get $r_{m,d}( P) \le 3d$ quoting again the same modified version
of \cite[Proposition 5.1]{lt}. We have $h^0(\mathcal {I}_A(2)) \ge 2$ if and only if there is a line $L$ with $\deg (L\cap A)\ge 4$.
\end{proof}

\section{Conclusions}
We give the stratification by the symmetric tensor rank of all degree $d \ge 9$ homogeneous polynomials
with border rank $5$ and which depend essentially on at least 5 variables, extending two previous works (\cite{bgi} for border
ranks 2 and 3, \cite{bb2} for border rank 4). For the polynomials
which depend on at least 5 variables only 5 ranks are possible: $5$, $d+3$, $2d+1$, $3d-1$, $4d-3$, but each of the ranks
$3d-1$ and $2d+1$ is achieved in two geometrically different situations. These ranks are uniquely determined by a certain degree
5 zero-dimensional scheme $A$ associated to the polynomial. The polynomial depends essentially on at least 5 variables
if and only if $A$ is linearly independent. The polynomial has rank $4d-3$ (resp $3d-1$, resp. $2d+1$, resp. $d+3$, resp. $5$)
if $A$ has $1$ (resp. $2$, resp. $3$, resp. $4$, resp. $5$) connected components. The assumption $d\ge 9$ guarantees that each polynomial
has a uniquely determined associated scheme $A$. 

In each case we describe the dimension of the families of the
polynomials with prescribed rank, each irreducible family being determined by the degrees
of the connected components of the associated scheme $A$. Each family of polynomials has dimension $5m+s$, where $s=1,\dots ,5$
is the number of connected components of $A$ (Remark \ref{e0e}). 

When $A$ has at least $3$ connected components we also describe
all linear forms evincing the rank $\rho$ of the polynomial $f$, i.e. all linear forms $\ell _i$ such that $f =\sum _{i=1}^{\rho} \lambda _i\ell _i^d$, $\lambda _i\in \mathbb {C}\setminus \{0\}$, up to
a non-zero multiple of each $\ell _i$ and a permutation of $\ell _1,\dots ,\ell _c$.

The proofs require projective geometry and some algebraic geometry.

\begin{acknowledgements}
We thanks the referee for useful comments.
\end{acknowledgements}

% Non-BibTeX users please use

\end{document}